\numberwithin{equation}{section}
 \theoremstyle{plain}            
 \newtheorem{theorem}{Theorem}[section]
 \newtheorem{proposition}[theorem]{Proposition}
 \newtheorem{lemma}[theorem]{Lemma}
 \theoremstyle{definition}       
 \newtheorem{remark}[theorem]{Remark}
\newcommand{\ee}{\mathrm{e}}
\newcommand{\D}{\mathrm{d}}
\newcommand{\R}{\mathbb{R}}
\newcommand{\OO}{\mathcal{O}}
\newcommand{\dist}{\mathrm{dist}}
\newcommand{\pd}{\partial}
\newcommand{\sumstar}{\!\raisebox{1.5ex}{{\scriptsize $*$}}}
\newcommand{\col}[1]{{\color{black}{#1}}}
\begin{document}

\title{Bound States in Bent Soft Waveguides}
\titlemark{Bound States in Bent Soft Waveguides}

\emsauthor{1}{Pavel Exner}{P.~Exner}


\emsaffil{1}{Doppler Institute for Mathematical Physics and Applied Mathematics, Czech Technical University,  B\v rehov\'a 7, 11519 Prague, Czechia \\ and Department of Theoretical Physics, NPI, Academy of Sciences, 25068 \v{R}e\v{z} near Prague, Czechia \email{exner@ujf.cas.cz}}

\emsauthor{1}{Semjon Vugalter}{S.~Vugalter}


\emsaffil{1}{Institute for Analysis, Karlsruhe Institute of Technology, Englerstrasse 2, 76131 Karlsruhe, Germany
\email{semjon.wugalter@kit.edu}}

\classification[35J10, 35P15]{81Q37}

\keywords{Soft quantum waveguide, Discrete spectrum, Curvature induced bound states, Potential bias}

\begin{dedication}
{To Brian Davies on the occasion of his 80th birthday}
\end{dedication}

\begin{abstract}
The aim of this paper is to show that a two-dimensional Schr\"odinger operator with the potential in the form of a `ditch' of a fixed profile can have a geometrically induced discrete spectrum; this happens if such a potential channel has a single or multiple bends being straight outside a compact. Moreover, under stronger geometric restrictions the claim remains true in the presence of a potential bias at one of the channel `banks'.
\end{abstract}

\maketitle


\section{Introduction}
\label{s: intro}

Behavior of quantum particles confined to tubular regions attracted a lot of attention in the last decades with the motivation coming from two sources. On the physics side it was the possibility to use such models to describe a guided dynamics in various condensed matter systems. At the same time, this appeared to be a source of interesting mathematical problems, in particular, those concerning spectral effects coming from the geometry of the confinement; for an introduction to the topic and a bibliography we refer to the book \cite{EK15}.

There are different ways how to localize a particle in the configuration space. One possibility is a hard confinement where the Hamiltonian is typically the Dirichlet Laplacian associated with a tube in $\R^d$ (or more complicated regions such as layers, networks, etc.). From the point of view of application to objects like semiconductor wires such a model has a drawback; it does not take into account the tunneling between different parts the waveguide. This fact motivated investigation of the `leaky' confinement in which the Hamiltonian is instead Schr\"odinger operator with an attractive singular interaction supported by a curve (or a surface, metric graph, etc.); to have it well defined, the codimension of the interaction support must not exceed three.

If we stay for simplicity in the two dimensional situation, both models exhibit \emph{curvature-induced bound states}: whenever the strip, or the curve supporting the $\delta$ interaction, is non-straight but asymptotically straight, the corresponding Hamiltonian has a non-void discrete spectrum; this claim is valid universally modulo technical requirements on the regularity and asymptotic behavior.

Leaky guide model has another drawback in assuming that the interaction support has zero width. This motivated recently investigation of a more realistic situation when the potential in the Schr\"odinger operator is regular in the form of a channel of a fixed profile \cite{Ex20}. The term coined was \emph{soft waveguides}; the analogous problem was studied in three dimensions \cite{Ex22} as well as for soft layers \cite{EKP20, KK22}. One has to add that such operators were considered before \cite{EI01, WT14}, however, the focus was then on the limit in which the potential shrinks transversally to a manifold; in the physics literature the idea of determining the right `quantization' on a manifold through such a limit was examined a long time ago \cite{KJ71, To88}.

Not very surprisingly, soft waveguides were already shown to share properties with their hard and leaky counterparts, an example is the ground state optimization in a loop-shaped geometry \cite{EL21}. Some results have also been obtained concerning the problem we are interested in here, the existence of curvature-induced bound states, however, so far they lack the universal character indicated above. In \cite{Ex20} Birman-Schwinger principle was used to derive a sufficient condition under which the discrete spectrum is nonempty, expressed in terms of of positivity of a certain integral which, in general, is not easy to evaluate. An alternative is to apply the variational method; in this way the existence was established in the example of a particular geometry, often referred to as a `bookcover' \cite{KKK21}. We note in passing that it is paradoxically easier to establish the existence in conic-shaped soft layers, where the discrete spectrum is infinite \cite{EKP20, KK22}.

The trouble with the variational approach is that it is not easy, beyond the simple example mentioned, to find a suitable trial function. The aim of this paper is to extend the existence result using a variational method to a much wider, even if still not optimal class of soft waveguides. The main restrictions in our analysis are the limitation of the curved part into a bounded region, a compact support of the potential defining the channel profile, and the requirement of the profile symmetry. The latter restriction can be relaxed in some situations, in particular, if the profile potential is sign-changing and the transverse part of operator, the operator \eqref{profileSO} below, has zero-energy resonance.

We will also consider the situation when the system has a constant positive potential bias in one of the regions separated by the profile potential support. In this case we have a stronger geometry restriction: we have to assume that one of the two regions is \emph{convex}. If the bias potential is supported in it, we can again prove the existence of a discrete spectrum, even without the symmetry assumption. If the bias is supported in the opposite region, we have the existence again, however, except in the situation when the operator \eqref{profileSO} has a zero-energy resonance; this is in agreement with the result of \cite{EV16} where we treated a system which can be regarded as a singular version of the present system. Let us stress that the convexity makes it also possible to prove the existence in the absence of the bias and the symmetry restriction, provided that operator \eqref{profileSO} has a negative eigenvalue.

In the following section we will state the problem in proper terms and present the main results. The rest of the paper is devoted to the proofs. The next two sections deal with case without the bias; in Sec.~\ref{s: proof1} we prove part (a) of Theorem~\ref{th:main1} which concerns the situation when the operator \eqref{profileSO} has a zero-energy resonance, Sec.~\ref{s: proof2} provides the proof of part (a) of Theorem~\ref{th:main2} which addresses the case when the operator has a negative eigenvalue and the channel profile is symmetric. Finally, in Sec.~\ref{s: proof3} we prove parts (b) of the two theorems which establish the existence results in the situation when one of the two regions to which the potential channel, not necessarily symmetric, divides the plane is convex, even in the absence of the bias, except in the zero-energy resonance case.


\section{Statement of the problem and main results}
\label{s: main}

Let us now state the problem described in the introduction. We begin with the assumptions which are split into two groups; the first one concerns the support of the potential, the other the channel profile. The former is a strip built around a curve $\Gamma$, understood as the graph of a function $\Gamma:\: \R\to\R^2$ such that $|\dot\Gamma(s)|=1$. Without repeating it at every occasion we always exclude the trivial situation when $\Gamma$ is a straight line; in addition to that we suppose:
 \begin{enumerate}[(s1)]
 \setlength{\itemsep}{1.5pt}
\item $\Gamma$ is \col{$C^1$, piecewise} $C^3$-smooth, non-straight but straight outside a compact; its curved part consists of a finite number of segments such that on each of them the monotonicity character of the signed curvature $\kappa(\cdot)$ of $\Gamma$ and its sign are preserved, \label{s1}
\item $|\Gamma(s_+)-\Gamma(s_-)|\to\infty$ as $s_\pm\to\pm\infty$, in other words, the two straight parts of $\Gamma$ are either not parallel, or if they are, they point in the opposite directions, \label{s2}
\item the strip neighborhood $\Omega^a := \{ x\in\R^2:\,\dist(x,\Gamma)<a \}$ of $\Gamma$ with a halfwidth $a>0$ does not intersect itself. \label{s3}
 \end{enumerate}
\begin{figure}[h!]
\centering
    \includegraphics[clip, trim=8cm 17cm 6cm 4cm, angle=0, width=0.5\textwidth]{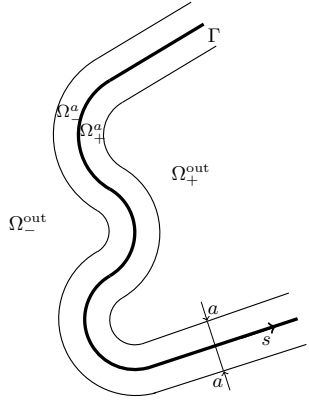}
\caption{Scheme of the waveguide}
\end{figure}

\noindent Assumption \ref{s3} has various equivalent expressions: one can say, for instance, that the function $\dist(x,\Gamma(\cdot))$ has for any fixed $x\in\Omega^a$ a unique minimum, or that the map
 \begin{equation} \label{strip}
x(s,t) \mapsto \big(\Gamma_1(s)-t\Gamma'_2(s), \Gamma_2(s)+t\Gamma'_1(s) \big)
 \end{equation}
from the straight strip $\Omega^a_0 := \R\times (-a,a)$ to $\R^2$ is a bijection, in fact, a diffeomorphism; $\vec n(s)=(-\Gamma'_2(s), \Gamma'_1(s))$ is, of course, the (inward) normal to the curve at the point $\Gamma(s)$. Under assumption \ref{s1}, the signed curvature $\kappa:\: \kappa(s)= (\Gamma'_2\Gamma''_1 - \Gamma'_1\Gamma''_2)(s)$ is \col{piecewise} smooth and compactly supported function; a necessary, but in general not sufficient condition for \ref{s3} to hold is $a\|\kappa\|_\infty<1$ which ensures the local injectivity of the map. The curve divides the plane into open regions which we denote as $\Omega_\pm$; for the sake of definiteness we assume that $\Omega_+$ is \emph{at the left side} when one looks in the direction of the increasing arc length variable $s$.

We also introduce $\Omega^a_\pm:= \Omega_\pm\cap\Omega^a$ so that we have $\Omega^a= \Omega^a_+\cup\Gamma\cup\Omega^a_-$; given our choice of the normal orintation,
the labels correspond to the sign of the transversal variable~$t$. Finally, we will use a natural symbol for the complement of the strip, namely $\Omega_\mathrm{out}:= \R^2 \setminus\Omega^a$, and its one-sided components will be denoted as $\Omega^\mathrm{out}_\pm:=\Omega_\pm\setminus\Omega^a$ -- cf.~Fig.~1.

The second group of assumptions concerns the potential. Its profile is determined by a function $v:\,\R\to\R$ of which we assume
 \begin{enumerate}[(p1)]
 \setcounter{enumi}{0}
 \setlength{\itemsep}{1.5pt}
 \item $v\in L^2(\R)$ and $\:\mathrm{supp}\,v \subset[-a,a]$\,; \label{p1}
  \end{enumerate}
in some situations, specifically in part (a) of Theorem~\ref{th:main2} below, we will require it additionally to be mirror-symmetric,
 \begin{enumerate}[(p1)]
 \setcounter{enumi}{1}
 \setlength{\itemsep}{1.5pt}
 \item $v(t)=v(-t)$ for $t\in[-a,a]$. \label{p2}
 \end{enumerate}
In addition to the potential defining the channel we are going to consider, in general, also a one-sided potential bias of the system. To this aim, we introduce the one-dimensional Schr\"odinger operator
\begin{equation}\label{profileSO}
 h:= -\frac{\D^2}{\D t^2} + v(t) + V_0\chi_{[a,\infty)}(t), \quad V_0\ge 0.
\end{equation}
The crucial role will be played by the spectral bottom of this operator, specifically we will be concerned with the following two possibilities:
 \begin{enumerate}[(p1)]
 \setcounter{enumi}{2}
 \setlength{\itemsep}{1.5pt}
 \item $\inf\sigma(h)$ is a negative (ground state) eigenvalue $\mu$ associated with a real-valued eigenfunction $\phi_0$ which we may without loss of generality normalize by the requirement $\phi_0(-a)=1$, \label{p3}
 \item operator $h$ has a zero-energy resonance \col{(virtual level)}, meaning that $h\ge 0$ and $-(1-\varepsilon)\frac{\D^2}{\D t^2} + v(t) + V_0\chi_{[a,\infty)}(t)$ has a negative eigenvalue for any $\varepsilon>0$. In this case, the equation $h\phi=0$ has a \col{solution $\phi_0 \in \dot{H}^1(\R)$ where $\dot{H}^1(\R)$ is the Hilbert space with the norm $\|\phi\|^2:= \int_{|x|\le 1} |\phi(x)|^2 \D x + \int_{\R} |\phi'(x)|^2 \D x\,$, cf.~\cite{BBV21}. Since $\mathrm{supp}\,v \subset [-a,a]$, this solution  is nonzero and constant for $|t|>a$ in the absence of the bias, for $V_0>0$ this is true for $t<-a$ only}. The solution will be again supposed to satisfy the normalization condition $\phi_0(-a)=1$. \label{p4}
 \end{enumerate}

The main object of our interest is the Schr\"odinger operator
\begin{subequations}
\label{Hamiltonian}
\begin{equation}\label{operator}
 H_{\Gamma,V} = -\Delta +V(x)
\end{equation}
on $L^2(\R^2)$ with the potential defined using the locally orthogonal coordinates $(s,t)$ appearing in \eqref{strip} as
\begin{equation}\label{potential}
 V(x) = \left\{ \begin{array}{cl}  v(t) & \qquad\text{if}\;\: x\in\Omega^a \\[.5em] V_0  & \qquad\text{if}\;\: x\in\Omega_+\!\setminus\Omega^a \\[.5em] 0 & \qquad\text{otherwise} \end{array} \right.
\end{equation}
We will often drop the subscript of $H_{\Gamma,V}$ if it is clear from the context.

\begin{proposition} \label{prop:ess}
Under the assumptions \ref{s1}--\ref{s3} \col{and} \ref{p1}, the operator \eqref{Hamiltonian} is self-adjoint, $D(H_{\Gamma,V})=H^2(\R^2)$, \col{and its essential spectrum is the same as for $\Gamma$ being a straight line,} $\sigma_\mathrm{ess}(H_{\Gamma,V})= [\mu,\infty)$. \col{The threshold $\mu$ is the lowest eigenvalue of $h$ if \ref{p3} is valid, while if the dicrete spectrum of $h$ is empty, we have} $\mu=0$.
\end{proposition}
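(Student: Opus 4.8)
\emph{Self-adjointness and the straight comparison.} Write $V = V_0\chi_{\Omega_+\setminus\Omega^a} + V_1$ with $V_1$ supported in $\Omega^a$. The first term is bounded, while $V_1$, pulled back to $\Omega^a_0$ through the diffeomorphism \eqref{strip}, equals $v(t)$ times the bounded weight $1 - t\kappa(s)$; by \ref{p1} and the uniform width of the strip $V$ is therefore uniformly locally square integrable plus bounded. Since $H^2(\R^2)\hookrightarrow L^\infty(\R^2)$ in dimension two, a standard unit-cube decomposition shows $V$ is $(-\Delta)$-bounded with relative bound zero, so $H_{\Gamma,V} = -\Delta + V$ is self-adjoint on $D(-\Delta) = H^2(\R^2)$ by Kato--Rellich. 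The same reasoning shows that for straight $\Gamma$ the operator is unitarily equivalent to $H^{\mathrm{str}} = -\partial_{x_1}^2\otimes I + I\otimes h$ with $h$ as in \eqref{profileSO}, hence $\sigma(H^{\mathrm{str}}) = [0,\infty) + \sigma(h) = [\mu,\infty)$ with $\mu := \inf\sigma(h)$, all of it essential. Since $h = -\D^2/\D t^2$ for $t < -a$, we have $\sigma_{\mathrm{ess}}(h) = [0,\infty)$, whence $\mu\le 0$, with $\mu$ a negative eigenvalue exactly when \ref{p3} holds and $\mu = 0$ when the discrete spectrum of $h$ is empty; this proves the last sentence of the proposition, and it remains to show $\sigma_{\mathrm{ess}}(H_{\Gamma,V}) = [\mu,\infty)$.

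\emph{The interval $[\mu,\infty)$ belongs to $\sigma_{\mathrm{ess}}(H_{\Gamma,V})$.} By \ref{s1}--\ref{s3} the two straight ends of $\Gamma$ diverge and the curved part is compact, so for $s_0$ sufficiently large a tubular neighbourhood of the half-line $\Gamma([s_0,\infty))$ of any prescribed transverse width is isometric -- through \eqref{strip} extended by orthogonal projection onto the ray -- to the corresponding half-strip of the straight configuration, and there $V$ matches $V^{\mathrm{str}}$ (in particular the value $V_0$ on $\Omega_+\setminus\Omega^a$ corresponds to $V_0\chi_{(a,\infty)}$). Since $H^{\mathrm{str}}$ commutes with translations along $x_1$, every $\lambda\in[\mu,\infty) = \sigma_{\mathrm{ess}}(H^{\mathrm{str}})$ admits a singular sequence of compact support whose supports escape to $x_1\to+\infty$; transplanting it through the above isometry produces a singular sequence for $H_{\Gamma,V}$ at $\lambda$, so $[\mu,\infty)\subseteq\sigma_{\mathrm{ess}}(H_{\Gamma,V})$.

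\emph{Nothing lies below $\mu$.} Here I would invoke Persson's formula $\inf\sigma_{\mathrm{ess}}(H) = \sup_{K}\inf\{\langle\psi,H\psi\rangle : \psi\in C_0^\infty(\R^2\setminus K),\ \|\psi\| = 1\}$. Fix a large transition width $W$ and a ball $B_R$, $R = R(W)$, so large that $\Omega^a\setminus B_R$ reduces to the two straight half-strips. Choose a smooth partition of unity $j_0^2 + j_+^2 + j_-^2\equiv 1$ on $\R^2\setminus B_R$ with $j_\pm\equiv 1$ on the $a$-neighbourhood of the respective end-ray, $j_\pm$ supported in its $(a+W)$-neighbourhood with $|\nabla j_\pm|\le C/W$, and $j_0$ supported (outside $B_R$) away from $\Omega^a$; the divergence of the ends makes the $\pm$-zones disjoint and this construction consistent. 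The IMS localization identity $\langle\psi,H\psi\rangle = \sum_k\langle j_k\psi,Hj_k\psi\rangle - \sum_k\||\nabla j_k|\psi\|^2$ then gives: on $\mathrm{supp}\,j_0$ one has $V\in\{0,V_0\}\ge 0$, so $\langle j_0\psi,Hj_0\psi\rangle\ge 0\ge\mu\|j_0\psi\|^2$; on $\mathrm{supp}\,j_\pm$ the form equals, via the isometry, that of $H^{\mathrm{str}}$ on a subdomain, hence is $\ge\mu\|j_\pm\psi\|^2$; and $\sum_k\||\nabla j_k|\psi\|^2\le 3C^2W^{-2}\|\psi\|^2$. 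With $\sum_k j_k^2 = 1$ this yields $\langle\psi,H\psi\rangle\ge(\mu - 3C^2W^{-2})\|\psi\|^2$, hence $\inf\sigma_{\mathrm{ess}}(H_{\Gamma,V})\ge\mu - 3C^2W^{-2}$, and $W\to\infty$ finishes the proof.

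\emph{Expected main obstacle.} The delicate step is this last inclusion. One cannot simply bracket across $\partial\Omega^a$, because a Neumann cut there replaces $h$ by its restriction to a finite transverse interval, whose Neumann ground-state energy lies strictly below $\mu$, while $v$ has no definite sign; so no naive splitting of $\langle\psi,H\psi\rangle$ into a strip part and an exterior part recovers the bound $\mu$ (without assuming the symmetry \ref{p2}). The resolution is to localize so that each piece either reproduces the \emph{whole} straight channel -- with ground-state energy $\mu$ -- or avoids the channel entirely, where the potential is nonnegative; the price is a localization error that does not tend to zero but is only $O(W^{-2})$ and can be made arbitrarily small by widening the transition region \emph{far out}, which is precisely where the geometric hypotheses \ref{s2}--\ref{s3} enter.
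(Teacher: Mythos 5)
Your argument is sound, but it proceeds quite differently from the paper's. For self-adjointness the paper works in the form sense: it invokes the KLMN theorem after splitting a test function as $\psi=\psi_++\psi_0+\psi_-$ (two straight portions of the strip plus the curved, bounded piece), treating the curved part via $V\in L^2$ on a bounded set and the straight parts via the one-dimensional bound in the transverse variable lifted by $\|\partial_t\psi\|\le\|\nabla\psi\|$; you instead use the Kato--Rellich route through uniform local square-integrability of $V$, the embedding $H^2(\R^2)\hookrightarrow L^\infty$ and a unit-cube decomposition, which has the advantage of yielding the domain statement $D(H_{\Gamma,V})=H^2(\R^2)$ directly (the paper's KLMN argument, strictly speaking, only controls the form domain). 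For the essential spectrum the paper does not argue at all: it cites \cite[Proposition~3.1]{Ex20} and remarks that the piecewise smoothness of $\Gamma$, the sign-indefinite $L^2$ profile allowed by \ref{p1}, and the added bias do not alter that proof; you give a self-contained two-sided argument -- transplanted, translation-generated Weyl sequences for $[\mu,\infty)\subseteq\sigma_\mathrm{ess}$, and Persson's formula with an IMS partition $j_0,j_\pm$ far outside a large ball for the bound $\inf\sigma_\mathrm{ess}\ge\mu$. Your localization scheme (each piece either reproduces the full transverse operator \eqref{profileSO} or avoids the channel where $V\ge0$, with an $O(W^{-2})$ error killed by widening the transition) is exactly the right way to avoid the bracketing trap you describe, and it is where \ref{s2}--\ref{s3} enter. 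Two details you should state explicitly, though they are routine: $R(W)$ must be taken so large that every point of $\mathrm{supp}\,j_\pm$ has its nearest point of $\Gamma$ on the corresponding half-line (so the potential there really coincides, under the straightening isometry, with $v(t)+V_0\chi_{(a,\infty)}(t)$ and the two tubes are disjoint -- this is precisely what \ref{s2} guarantees), and Persson's theorem is applicable because $L^2_\mathrm{unif}(\R^2)$ potentials lie in the two-dimensional Kato class. With those remarks added, your proof is a legitimate, more self-contained alternative to the paper's citation-based treatment.
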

\begin{proof}
The self-adjointness is easy to check; it is sufficient to ascertain, using assumption \ref{p1}, that the potential \eqref{potential} is infinitely small with respect to $-\Delta$. \col{We will do it in the form sense using the KLMN theorem \cite[Sec.~X.1]{RS} by checking that to any $a>0$ there is a $b>0$ such that $(\psi,V\psi) \le a\|\nabla\psi\|^2 +b\|\psi\|^2$ holds for all $\psi\in H^1(\R^2)$.} Suppose first that $V_0=0$. We decompose any \col{such} $\psi$ into a sum $\psi=\psi_+ + \psi_0 + \psi_-$ of \col{$H^1$} functions such that $\mathrm{supp}\,\psi_\pm\upharpoonright_{\Omega^a}$ \col{contain} the straight parts of the strip and $\mathrm{supp}\,\psi_0\upharpoonright_{\Omega^a}$ \col{is bounded and} contains the curved part. To the latter the theorem applies directly since  $V\upharpoonright_{\mathrm{supp}\,\psi_0}\in L^2$. In the straight parts we get first using \ref{p1} the one-dimensional version of the inequality in the transverse variable, then we lift it to two dimensions \col{using the inequality $\|\partial_t\psi\|^2 \le \|\nabla\psi\|^2$}. The constants $b_j,\, j=0,\pm,$ in the obtained inequalities are in general different; we put \col{$b:=3\max\{b_+,b_0,b_-\}$}. Using then the triangle and Schwarz inequalities which, in particular, give \col{$\|\psi\|^2 \le 3\sum_j \|\psi_j\|^2$}, we arrive at the desired conclusion. Finally, the self-adjointness is \col{certainly} not affected by adding the bounded potential $V_0\chi_{\Omega^\mathrm{out}_+}$.

The identification of the essential spectrum of $H_{\Gamma,V}$ with the interval $[\mu,\infty)$, where $\mu=\inf\sigma(h)$, was established in \cite[Proposition~3.1]{Ex20} under slightly different assumptions. The argument can be easily modified for our present purpose; the requirement on the smoothness of $\Gamma$ we made is stronger than there, and neither the substitution of a bounded negative $v$ by a possibly sign indefinite square integrable one, nor the addition of a potential bias alters the conclusion.
\end{proof}

Note also that the above Hamiltonian can be investigated using the associate quadratic form $Q_{\Gamma,V}$, mostly written without the indices specifying the curve and the potential, and defined by
\begin{equation}\label{form}
Q[\psi] := \|\nabla\psi\|^2 + \int_{\Omega} V(x) |\psi(x)|^2\, \D x,\quad D(Q)=H^1(\R^2)\,;
\end{equation}
\end{subequations}
in the \col{absence of the potential bias the potential of the profile $v$ is integrated over $\Omega^a$, while for $V_0>0$ we have in addition $V_0 \int_{\Omega_+^\mathrm{out}} |\psi(x)|^2\, \D x$ on the right-hand side.}

Now we are in position to state our main results. The assumptions may appear in various combinations; we group them according the according to the spectral threshold $\mu$ starting with the situation when operator \eqref{profileSO} has a zero-energy resonance:

\begin{theorem}[threshold resonance case] \label{th:main1}
Assume \ref{s1}--\ref{s3}, \ref{p1} and \ref{p4}; then the following claims are valid: \\[.2em]
(a) If the bias is absent, $V_0=0$, and
\begin{equation} \label{alt_assumpt}
[\phi_0(a)^2-\phi_0(-a)^2] \int_\R \kappa(s)\, \D s \le 0
\end{equation}
holds, then $H_{\Gamma,V}$ has at least one negative eigenvalue. \\[.2em]
(b) The same is true if $V_0>0$ and  $\Omega_+$ is \emph{convex}.
\end{theorem}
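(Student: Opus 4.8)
The plan is to prove both claims by the variational method. By Proposition~\ref{prop:ess} the hypothesis \ref{p4} forces $\mu=0$, so $\sigma_\mathrm{ess}(H_{\Gamma,V})=[0,\infty)$, and it suffices to exhibit a single $\psi\in H^1(\R^2)=D(Q)$ with $Q[\psi]<0$: then $\inf\sigma(H_{\Gamma,V})<0=\inf\sigma_\mathrm{ess}(H_{\Gamma,V})$, so $H_{\Gamma,V}$ has a negative eigenvalue.

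The ansatz I would use is built from the transverse resonance mode. Let $\Phi$ denote the function that on the channel $\Omega^a$ equals $\phi_0(t)$ in the coordinates \eqref{strip}, on $\Omega^\mathrm{out}_-$ equals the constant $\phi_0(-a)=1$, and on $\Omega^\mathrm{out}_+$ equals $\phi_0(a)$ in case (a) and $\phi_0(a)\,\ee^{-\sqrt{V_0}(\dist(x,\Gamma)-a)}$ in case (b); thus $\Phi$ is positive and bounded, it is $C^1(\R^2)$ in case (a) (by $\phi_0'(\pm a)=0$) and Lipschitz in case (b), and in case (a) it is asymptotically constant in every direction. The basic tool is the ground-state substitution: for real $F$ decaying at infinity,
\[
Q[\Phi F]=\int_{\R^2}\Phi^2\,|\nabla F|^2\,\D x+\int_{\R^2}\big(\Phi\,(-\Delta+V)\Phi\big)\,F^2\,\D x .
\]
Using $h\phi_0=0$, a computation in \eqref{strip} gives $(-\Delta+V)\Phi=-\frac{\kappa(s)}{1+t\kappa(s)}\,\phi_0'(t)$ on $\Omega^a$ and $0$ on $\Omega^\mathrm{out}_-$ (and, in case (a), on $\Omega^\mathrm{out}_+$ too), so that the density $\Phi\,(-\Delta+V)\Phi=-\frac{\kappa(s)}{2(1+t\kappa(s))}\,(\phi_0^2)'(t)$ is bounded and supported on the curved part only. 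In case (b) there is in addition a contribution from $\Omega^\mathrm{out}_+$: since $\Omega_+$ is convex, $\dist(\cdot,\Gamma)$ is concave there, whence $\Phi$ is a subsolution of $-\Delta+V_0$ and $\Phi\,(-\Delta+V)\Phi\le0$ on $\Omega^\mathrm{out}_+$ (the singular part of $\Delta\Phi$ on the cut locus included), i.e.\ this region contributes a non-positive term to $Q[\Phi F]$.

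Next I would take $F=\eta_R$, a two-dimensional logarithmic cut-off ($\eta_R\equiv1$ on $\{|x|\le R\}$, $\eta_R(x)=\log(R^2/|x|)/\log R$ on $\{R\le|x|\le R^2\}$, $\eta_R\equiv0$ beyond), for which $\int|\nabla\eta_R|^2\,\D x=O(1/\log R)\to0$; this is where the threshold character of $\mu=0$ is exploited. Since $\Phi$ is bounded, $\int_{\R^2}\Phi^2|\nabla\eta_R|^2\,\D x\to0$, while $\int_{\R^2}\big(\Phi(-\Delta+V)\Phi\big)\eta_R^2\,\D x\to-\tfrac12\big[\phi_0(a)^2-\phi_0(-a)^2\big]\int_\R\kappa(s)\,\D s$ in case (a) (plus the non-positive $\Omega^\mathrm{out}_+$-term in case (b)). Under \eqref{alt_assumpt} this limit is $\ge0$, so $\Phi\eta_R$ by itself is not enough and must be improved by a localized correction $\zeta\in C_c^\infty(\R^2)$ supported in a fixed neighbourhood of the curved part: writing $w:=\Phi\,(-\Delta+V)\Phi$, for $R$ so large that $\eta_R\equiv1$ there one has
\[
Q[\Phi\eta_R+\zeta]=Q[\Phi\eta_R]+2\,\mathrm{Re}\,\langle w,\zeta\rangle+Q[\zeta] ,
\]
so, letting $R\to\infty$, the task reduces to producing $\zeta$ with $2\,\mathrm{Re}\,\langle w,\zeta\rangle+Q[\zeta]$ strictly below $-\lim_R Q[\Phi\eta_R]$. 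Arguing by contradiction (assume $H_{\Gamma,V}\ge0$), the optimal choice of $\zeta$ turns this into the comparison $\langle w,H_{\Gamma,V}^{-1}w\rangle>\lim_R Q[\Phi\eta_R]$, and the role of the hypothesis \eqref{alt_assumpt} --- respectively, of $V_0>0$ together with the convexity of $\Omega_+$, which supplies the extra non-positive term above --- is precisely to make this comparison come out in our favour; the sought trial function is then obtained by a diagonal choice of $R$ and $\zeta$.

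The step I expect to be the main obstacle is exactly this last one: carrying out the threshold/resolvent analysis at the bottom of the essential spectrum in the presence of the zero-energy resonance, i.e.\ making rigorous that the localized correction $\zeta$ can be chosen to beat the limiting value of $Q[\Phi\eta_R]$; this needs an accurate evaluation of $\sup_\zeta|\langle w,\zeta\rangle|^2/Q[\zeta]$ together with a careful bookkeeping of the first- and second-order-in-$\kappa$ contributions, and of the boundary terms along $\partial\Omega^a$ produced once $\zeta$ need not be smooth across the channel walls. A secondary difficulty, specific to part (b), is the construction and estimation of $\Phi$ on $\Omega^\mathrm{out}_+$: there the orthogonal coordinates \eqref{strip} break down because the normals of $\Gamma$ focus, so one must instead use the convexity of $\Omega_+$ --- concavity of $\dist(\cdot,\Gamma)$ and the resulting sign of $\Delta\dist(\cdot,\Gamma)$, its distributional part included --- both to obtain a usable decaying extension and to control its contribution to $Q$; this convexity is also what makes the symmetry assumption \ref{p2} dispensable here, since the bias already breaks the left--right symmetry of the configuration.
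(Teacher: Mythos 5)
Your overall framework is the paper's: a trial function built from the transverse resonance mode, cut off by a two-dimensional logarithmic mollifier so that the kinetic cost is $O(1/\log)$ and vanishes in the limit, and your ground-state substitution is a legitimate repackaging of the computation carried out in Sec.~\ref{ss:trial inside}. The genuine gap is exactly the step you flag as the "main obstacle": when \eqref{alt_assumpt} holds with equality (mirror-symmetric profile, or parallel asymptotes with $\int_\R\kappa\,\D s=0$) the leading term vanishes, and you never actually produce the localized correction $\zeta$ -- you reduce the problem to the comparison $\langle w,H_{\Gamma,V}^{-1}w\rangle>\lim_R Q[\Phi\eta_R]$ and leave the threshold/resolvent analysis open. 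No such analysis is needed, and this is the idea you are missing: since $w=\Phi(-\Delta+V)\Phi$ restricted to $\Omega^a$ is proportional to $\kappa(s)\phi_0'(t)$, which does not vanish identically ($\phi_0$ cannot be constant on $[-a,a]$ and $\kappa\not\equiv0$), one takes $\zeta=\nu g$ with $g$ supported where $\kappa$ and $\phi_0'$ keep a fixed sign and with the sign of $g$ chosen so that the cross term equals $-\delta\nu<0$; this linear-in-$\nu$ term dominates $Q[\zeta]=O(\nu^2)$, so a fixed small $\nu$ yields a fixed strictly negative contribution, and only afterwards are $s_0$, $s^*$ (and your $R$) sent to infinity. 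This is precisely the role of the term $\nu g$ in \eqref{trial_in} and of the expression \eqref{middle} in the paper. Note also that your limiting curvature term has the opposite sign to the paper's conventions: with the metric factor $1-\kappa(s)t$ and $\Omega_+$ on the side $t>0$ one obtains $+\tfrac12\big[\phi_0(a)^2-\phi_0(-a)^2\big]\int_\R\kappa\,\D s$, which is non-positive under \eqref{alt_assumpt}; this sign slip is what makes you believe you must beat a possibly positive limit and hence need resolvent information at the threshold.

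Part (b) contains a second gap. The observation that $\dist(\cdot,\Gamma)$ is concave on the convex side, so that $\Phi(-\Delta+V)\Phi\le0$ on $\Omega_+^\mathrm{out}$, is correct and attractive, but this qualitative sign is not sufficient. Without symmetry or \eqref{alt_assumpt} nothing forces $\phi_0(a)\le\phi_0(-a)$, so the interior curvature term $\tfrac12\big[\phi_0(a)^2-1\big]\int_\R\kappa\,\D s$ can be strictly positive precisely when $\Omega_+$ is convex, i.e.\ when $\int_\R\kappa\,\D s=\pi-2\theta_0>0$; a bounded correction $\zeta$ cannot be guaranteed to absorb it. One must therefore evaluate the exterior contribution quantitatively, which is where convexity really enters in the paper: the decomposition of the outer region into the sectors $\omega_{1j},\omega_{2j},\omega_{3j}$, the distance-extrema inequalities, and Lemma~\ref{l:justomega1} (excluding the $\omega_{2j}$-contributions on the convex side), leading after cancellations to $Q[\psi]=-\tfrac18\delta\nu-\int_\R\kappa(s)\,\D s+o(\psi)$ as in \eqref{biasedform2}, with convexity finally supplying $\int_\R\kappa\,\D s>0$. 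Your sketch provides no mechanism producing this compensating negative quantity (nor the approximation step by curves of piecewise constant curvature used to justify the geometric bookkeeping), so as written part (b) does not close either.
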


\begin{remark} \label{rem:wmirror}
Recall that $\kappa$ does not vanish identically. The condition \eqref{alt_assumpt} is naturally satisfied if $\phi_0(a)=\phi_0(-a)$, in particular, under the mirror-symmetry assumption \ref{p2}. Consider further the asymmetric situation, $\phi_0(a)\ne\phi_0(-a)$, and recall that the integral in \eqref{alt_assumpt} equals $\pi-2\theta$ where $2\theta$ is the angle between the asymptotes. Consequently, at least one bound state exists then in the zero-energy resonance case if the asymptotes of $\Gamma$ are parallel and pointing in the opposite directions, $\theta=\frac12\pi$, or if they are not parallel and the resonance solution $\phi_0$ is larger at the `outer' side of the strip~$\Omega^a$.
\end{remark}

\noindent If $h$ has negative eigenvalues so that $\mu<0$, the situation is more complicated and we have to make stronger restrictions on the profile or the shape of the waveguide:
\begin{theorem}[eigenvalue case] \label{th:main2}
Assume \ref{s1}--\ref{s3} together with \ref{p1} and \ref{p3}. Then $\sigma_\mathrm{disc}(H_{\Gamma,V})$ is nonempty under any of the following conditions: \\[.2em]
(a) $V_0=0$ and assumption \ref{p2} is satisfied. \\[.2em]
(b) $V_0\ge 0$ and \emph{one of the regions} $\Omega_\pm$ is \emph{convex}.
\end{theorem}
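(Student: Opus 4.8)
The plan is to establish the existence of discrete spectrum in both cases by exhibiting a trial function $\psi\in H^1(\R^2)$ with $Q[\psi] < \mu\|\psi\|^2$, where $\mu=\inf\sigma_\mathrm{ess}(H_{\Gamma,V})$ from Proposition~\ref{prop:ess}. The natural ansatz is a product-type function built from the one-dimensional ground state $\phi_0$ of the operator $h$ in \eqref{profileSO}. Since $\mu$ is attained (in the limit, at infinity along the straight parts) by the transverse profile $\phi_0(t)$ extended in $s$, the idea is to take $\psi(s,t) = \varphi(s)\,\phi_0(t)$ inside the strip $\Omega^a$, suitably extended to the outer region, where $\varphi$ is a slowly decaying longitudinal cutoff (a mollified tent function of width $L$, eventually sent to infinity), and then compute $Q[\psi]-\mu\|\psi\|^2$ in the curvilinear coordinates \eqref{strip}. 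In these coordinates the metric picks up the Jacobian factor $(1-t\kappa(s))$, and the Laplacian acquires curvature-dependent terms; the key point, familiar from the hard/leaky waveguide literature, is that integrating the cross terms produces a contribution proportional to $\int_\R\kappa(s)\,\D s$ multiplied by a transverse integral involving $\phi_0$, $\phi_0'$ and $v$. The leading ($L$-independent) part of this contribution must be shown to be strictly negative, while the longitudinal kinetic energy $\int|\varphi'|^2$ is $O(1/L)$ and hence negligible for $L$ large.

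For case (a) — $V_0=0$ and the profile symmetric, assumption \ref{p2} — the argument I would run is essentially the one behind Theorem~\ref{th:main1}(a): take $\psi = \varphi(s)\phi_0(t)$ and expand. By the normalization and the symmetry $v(t)=v(-t)$, the eigenfunction $\phi_0$ is even, so $\phi_0(a)=\phi_0(-a)$ and the boundary term $[\phi_0(a)^2-\phi_0(-a)^2]\int\kappa$ in \eqref{alt_assumpt} vanishes; moreover $\phi_0$ now decays exponentially (since $\mu<0$), so there is no subtlety with the behaviour at infinity and $\psi\in L^2$ automatically. One then checks that $Q[\psi]-\mu\|\psi\|^2$ reduces, after integration by parts in $s$ and using $h\phi_0=\mu\phi_0$, to a sum of a negative curvature term of definite sign — coming from the geometry exactly as in the Dirichlet case, roughly $-\int\int \kappa(s)|\varphi(s)|^2 (\text{positive transverse density})\,\D s\,\D t$ plus controllable higher-order-in-$t$ corrections — and the positive but vanishing term $\int|\varphi'|^2\phi_0^2$. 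Choosing $\varphi$ to concentrate on the support of $\kappa$ with a long flat plateau and slow decay then gives $Q[\psi]<\mu\|\psi\|^2$. The symmetry is what guarantees the vanishing of the single boundary term that could have the wrong sign; without it one would need the sign condition \eqref{alt_assumpt}, which is precisely why \ref{p2} is imposed here.

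For case (b) — one of the regions $\Omega_\pm$ convex, $V_0\ge 0$ arbitrary — the mechanism is different and, I expect, this is where the real work lies. Here one does not use the product ansatz but rather compares directly with a half-plane problem. Say $\Omega_-$ (the concave side, from the standpoint of the bend) is the one we exploit, or rather: if $\Omega_+$ is convex then its complement $\Omega_-$ contains the bend's concavity, and geometrically a convex $\Omega_+$ means $\Omega_-^\mathrm{out}$ strictly contains a half-plane after the bend. The strategy is to take a trial function supported near the curved part and in $\Omega_-$, of the form $\psi(s,t)=\varphi(s)\,g(t)$ where $g$ is built from the transverse ground state but extended into $\Omega_-^\mathrm{out}$, exploiting that on the convex side the Jacobian $(1-t\kappa)$ works in our favour (the integration region effectively expands). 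The convexity ensures $\kappa$ has a fixed sign along the whole bend, so $\int\kappa$ is large in modulus and the curvature term does not suffer cancellations; it also lets us embed the relevant part of $\R^2$ so that a one-dimensional test computation on the half-line transfers. The main obstacle — and the step I would budget the most care for — is handling the outer region $\Omega_\mathrm{out}$ and the matching of $\psi$ across $\Gamma$ and across $\pd\Omega^a$ while keeping $\psi\in H^1$, since outside the strip the curvilinear coordinates break down and one must patch in Cartesian coordinates; one has to verify that the potential-free (or bias-laden, if $V_0>0$ and the bias sits in $\Omega_+$) outer contribution does not destroy the negative curvature gain. For $V_0>0$ supported in the convex region $\Omega_+$, the bias only helps if the trial function lives essentially in $\Omega_-$; the delicate bookkeeping is to ensure the tails of $\psi$ leaking into $\Omega_+^\mathrm{out}$ contribute a bias penalty that is still smaller than the curvature gain, which is why convexity (controlling the geometry of $\Omega_+^\mathrm{out}$) is essential.
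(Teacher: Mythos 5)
There is a genuine gap, and it sits at the heart of part (a). Your product ansatz $\psi=\varphi(s)\phi_0(t)$ does \emph{not} produce a negative curvature term of definite sign in the soft-waveguide setting: when one writes the form in the coordinates \eqref{strip}, the curvature-coupled contributions of the pure product reduce, after integration by parts in $t$, to the boundary-type terms appearing in \eqref{firstterm}, namely $-\bigl[\xi_+\phi_+^2+\xi_-\bigr]a\int\kappa$ and $\tfrac12(\phi_+^2-1)\int\kappa$, and under \ref{p2} with $V_0=0$ (so $\phi_+=\phi_-=1$, $\xi_+=-\xi_-$) \emph{both vanish identically}. Unlike the Dirichlet case you invoke, there is no first-order geometric gain from the product alone; the paper must inject the negativity by adding a perturbation $\nu g(s,t)$ supported where $\kappa$ and $\phi_0'$ have fixed signs, whose linear-in-$\nu$ cross term with the curvature yields $-\delta\nu$, cf.\ \eqref{middle}. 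Moreover you cannot confine the trial function to the strip (truncating $\phi_0$ at $t=\pm a$ costs $O(1)$ per unit length), so it must be extended outside as $\phi(x)=\ee^{-\xi(\mathrm{dist}(x,\Gamma)-a)}$, and the real difficulty — which your proposal does not address at all for (a) — is showing that the exterior energy does not exceed the interior budget $2|\mu|^{1/2}\|\chi_\mathrm{in}\|^2$ from \eqref{inner_sym} by more than a fraction of $\delta\nu$, i.e.\ inequality \eqref{est9}. In the curved geometry the exterior gains area on the convex side and loses it on the concave side, and proving that these effects do not produce a net positive error requires the whole machinery of Sections 4.2--4.3: pairing of local extrema of the distance function $d_x(\cdot)$, the sector decomposition $\omega_{1j},\omega_{2j},\omega_{3j}$ for curves of piecewise constant curvature (assumption \ref{s4}), the exact per-arc cancellation giving $|\Gamma_j|\xi^{-1}$, and the extension to general $\Gamma$ via the Sabitov--Slovesnov approximation (Theorem~\ref{th:ss10}). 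Without this, your argument for (a) does not close.

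For part (b) the proposal is also off target in how convexity enters. It is not used to make $\int\kappa$ large or sign-definite so that a curvature term wins (that is the mechanism in \eqref{biasedform2} for Theorem~\ref{th:main1}(b), the resonance case); in the eigenvalue case the interior estimate \eqref{inner} is the same general one, and convexity is needed to control the \emph{exterior} integrals of the two-sided profile $\phi_\pm\ee^{-|\xi_\pm|(\mathrm{dist}(x,\Gamma)-a)}$ of \eqref{outer2} without any symmetry: Lemma~\ref{l:justomega1} shows that for $x$ on the non-convex side all non-global distance extrema are approached from the convex side, which eliminates the $\omega_{2j}$ contributions in \eqref{est13}--\eqref{est16}, and the concave-side estimate \eqref{tildeminus} produces exactly the terms that cancel the $\xi_-$-terms in \eqref{inner}. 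Your sketch (trial function "essentially in $\Omega_-$", comparison with a half-plane, bias as a small leakage penalty) neither reproduces this cancellation nor explains how the asymmetric boundary terms in \eqref{inner} are absorbed, so as written it is a plausible heuristic but not a proof.
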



\section{Proof of Theorems~\ref{th:main1} \col{and \ref{th:main2}} -- the first part}
\label{s: proof1}

With the later purpose in mind we will formulate the argument first in the general situation which involves both the bound-state and zero-energy-resonance cases as well as the possible potential bias. In view of Proposition~\ref{prop:ess}, it is sufficient to construct a trial function $\psi\in H^1(\R^2)$ such that $Q[\psi]<\mu\|\psi\|^2$. Let us first fix the geometry. If the two straight parts of $\Gamma$ are not parallel -- cf.~Fig.~1 -- their line extensions intersect at a point which we choose as the origin $O$, and use polar coordinates with this center, in which the two halflines correspond to the angles $\pm\theta_0$ for the appropriate $\theta_0\in(0,\frac12\pi)$. Furthermore, we fix the point $s=0$ in such a way that for large $|s|$ the points with the coordinates $\pm s$ have the same Euclidean distance from~$O$ \col{-- cf.~Fig.~2.}
\begin{figure}[h!]
\centering
    \includegraphics[clip, trim=8cm 17cm 6cm 4cm, angle=0, width=0.5\textwidth]{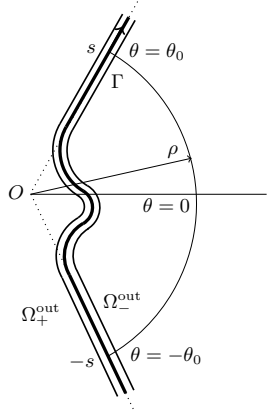}
\caption{Parallel and polar coordinates used in the proof}
\end{figure}

If the asymptotes are parallel (and pointing in the opposite directions according to \ref{s2}), we choose the origin as the point with equal distance from the endpoints of the two halflines. The point with $s=0$ on the curve is likewise chosen so that those with the coordinates $\pm s$ have the same Euclidean distance from the origin; in both cases one can check easily that such a choice is unique.

\subsection{Trial function inside the strip}
\label{ss:trial inside}

For fixed values $s_0$, such that the points with coordinates $\pm s_0$ lay outside the curved part of $\Gamma$, and $s^*>s_0$, to be chosen later, we define
\begin{equation}\label{mollif}
 \chi_\mathrm{in}(s) := \left\{ \begin{array}{cl}  1 & \qquad\text{if}\;\: |s|<s_0 \\[.5em] \ln\frac{s^*}{|s|} \big(\ln\frac{s^*}{s_0} \big)^{-1}  & \qquad\text{if}\;\: s_0\le |s|\le s^* \\[.5em] 0 &\qquad\text{if}\;\: |s|> s^* \end{array} \right.
\end{equation}
Recalling that $\phi_0$ is the ground-state eigenfunction or the zero-energy solution normalized by $\phi_0(-a)=1$, we put
\begin{equation}\label{trial_in}
 \psi(s,t) = \phi_0(t)\chi_\mathrm{in}(s) + \nu g(s,t),\quad |t|\le a,
\end{equation}
where the parameter $\nu$ and the function $g$, compactly supported within $(-s_0,s_0)\times(-a,a)$, will be chosen later. We denote by $Q_\mathrm{int}[\psi]$ the contribution to the shifted quadratic form, $Q[\psi]-\mu\|\psi\|^2$, coming from the strip $\Omega^a$, which can be using the parallel coordinates expressed as
\begin{align*}
 Q_\mathrm{int}[\psi] =& \int_{|t|\le a} \Big\{ \Big(\frac{\pd\psi}{\pd s}\Big)^2 (1-\kappa(s)t)^{-1} + \Big(\frac{\pd\psi}{\pd t}\Big)^2 (1-\kappa(s)t) \nonumber \\[.3em]
 & + (v(t)-\mu) |\psi|^2 (1-\kappa(s)t) \Big\}\, \D s\D t. 
\end{align*}
The first term on the right-hand side can be estimated as
$$ 
 \int_{|t|\le a} \Big(\frac{\pd\psi}{\pd s}\Big)^2 (1-\kappa(s)t)^{-1}\, \D s\D t \le 2\tau_0^{-1} \|\phi_0\!\upharpoonright_{[-a,a]}\|^2 \|\chi'_\mathrm{in}\|^2 +C\nu^2,
$$ 
where the norm refers to $L^2(\R)$, $\,\tau_0:= 1-a\|\kappa\|_\infty$ is positive by \ref{s3}, and $C$ depends on the function $g$ only; we will use the same letter for generic constants in the following. Note that choosing the parameter $s^*$ in \eqref{mollif} large one can make \col{for any fixed $s_0$} the norm $\|\chi'_\mathrm{in}\|= \big(\ln\frac{s^*}{s_0} \big)^{-1} \big(\frac{1}{s_0} - \frac{1}{s^*}\big)^{1/2}$ small. As for the other two terms, we have
\begin{align}
 & \int_{|t|\le a} \Big\{\Big(\frac{\pd\psi}{\pd t}\Big)^2 (1-\kappa(s)t)  + (v(t)-\mu) |\psi|^2 (1-\kappa(s)t) \Big\}\, \D s\D t \nonumber \\[.3em]
 & = \int_{|t|\le a} \big\{ (\phi_0'(t))^2 + (v(t)-\mu)|\phi_0(t)|^2\big\} \chi_\mathrm{in}^2(s) (1-\kappa(s)t)\, \D s\D t \nonumber \\[.3em]
 & \quad + 2\nu \int_{|t|\le a} \Big\{ \phi_0'\frac{\pd g}{\pd t} + (v(t)-\mu)\phi_0 g \Big\} \chi_\mathrm{in}(s) (1-\kappa(s)t)\, \D s\D t \nonumber \\[.3em]
 & \quad + \nu^2 \int_{|t|\le a} \Big\{ \Big(\frac{\pd g}{\pd t}\Big)^2 + (v(t)-\mu)|g|^2 \Big\} (1-\kappa(s)t)\, \D s\D t, \label{two_terms}
\end{align}
where the last term on right-hand side can be again estimated by $C\nu^2$ with a $C$ depending on the function $g$ only. Furthermore, integrating the middle term by parts with respect to $t$, we get
\begin{align}
 & 2\nu \int_{|t|\le a} \big[ -\phi_0'' + (v(t)-\mu)\phi_0\big] \chi_\mathrm{in}(s) g(s,t)(1-\kappa(s)t)\, \D s\D t \nonumber \\[.3em]
 & \col{+}\,2\nu \int_{|t|\le a} \phi_0'(t) \chi_\mathrm{in}(s) g(s,t)\kappa(s)\, \D s\D t, \label{middle}
\end{align}
where the square bracket in the first integral is zero by assumption.

Notice next that $\phi_0'$ cannot vanish identically in the interval $[-a,a]$. Indeed, it is continuous in $\R$ and we have $v(t)=0$ for $|t|>a$, hence should the derivative $\phi_0'$ be zero in $[-a,a]$, the function must have been a constant one, however, that is impossible for an eigenfunction or a zero-energy resonance solution. This observation allows us to choose the function $g$ in such a way that the last integral is \col{negative. To this aim, it suffices to have it supported in a region where both $\phi_0'$ and $\kappa$ do not change sign -- recall that $\chi_\mathrm{in}(s)=1$ holds on the support of $\kappa$ -- and to pick the sign of $g(s,t)$ accordingly.} With such a choice the expression \eqref{middle} will \col{equal to $-\delta\nu$ where $-\delta<0$ is the value of the last integral. For} small $\nu$ this linear term will dominate over those estimated by $C\nu^2$, \col{and consequently, there is a $\nu_0>0$ such that for all $\nu\in(0,\nu_0)$ and any $s_0,\,s^*$ indicated in \eqref{mollif} the sum of the last two terms at the right-hand side of \eqref{two_terms} will be smaller than $-\frac12\delta\nu$. Note that for a fixed $v$ determining $\phi_0$ and $\mu$, and fixed $g(s,t)$, the value $\nu_0$ depends continuously on $\kappa$. Hence if we have two curves $\Gamma$ and $\tilde\Gamma$ the curvatures of which do not differ much, the corresponding values of $\nu_0$ are close to each other too; we will use this fact later in Sec.~4.3.}

It remains to deal with the first term on the right-hand side of \eqref{two_terms}. To simplify the notation, we introduce the following symbols,
\begin{equation} \label{shorthands}
\phi_+=\phi_0(a),\quad \xi_+=-\sqrt{|\mu|+V_0},\quad \xi_-=\sqrt{|\mu|}.
\end{equation}
which allows us to write $\phi'_0(a)=\xi_+\phi_+$ and $\phi'(-a)=\xi_-$; recall that $\phi_0(-a)=1$ holds by assumption. The expression in question then can be rewritten using integration by parts as follows:
\begin{align}
 & \int_{|t|\le a} \big\{ (\phi_0'(t))^2 + (v(t)-\mu)|\phi_0(t)|^2\big\} \chi_\mathrm{in}^2(s) (1-\kappa(s)t)\, \D s\D t \nonumber \\[.3em]
 & = \int_\R \big[ \xi_+\phi_+^2(1-\kappa(s)a) - \xi_-(1+\kappa(s)a)\big] \chi_\mathrm{in}^2(s)\, \D s \nonumber \\[.3em]
 & \quad + \int_{|t|\le a} \big\{ -\phi_0''(t) + (v(t)-\mu)\phi_0(t)\big\} \phi_0(t) (1-\kappa(s)t) \chi_\mathrm{in}^2(s) \, \D s\D t \nonumber \\[.3em]
 & \quad + \int_{|t|\le a} \col{\phi_0'(t)\phi_0(t)}\, \kappa(s)\, \chi_\mathrm{in}^2(s) \, \D s\D t \nonumber \\[.3em]
 & = \big[ \xi_+\phi_+^2 - \xi_-\big] \|\chi_\mathrm{in}\|^2 - \big[ \xi_+\phi_+^2 + \xi_-\big]\,a \int_\R \kappa(s) \chi_\mathrm{in}^2(s) \, \D s \nonumber \\[.3em]
 & \quad + \frac12 (\phi_+^2-1) \int_\R \kappa(s) \chi_\mathrm{in}^2(s) \, \D s, \label{firstterm}
\end{align}
where the norm in the last expression refers to $L^2(\R)$ and we have used the identity $\phi_0'\phi_0 = \frac12(\phi_0^2)'$. Since $\kappa$ has a compact support and $\chi_\mathrm{in}^2(s)=1$ holds on it by \eqref{mollif}, we can replace the integrals in the last part of \eqref{firstterm} by $\int_\R \kappa(s)\,\D s$. Summarizing the estimate, we have obtained for all sufficiently small $\nu$ the inequality
\begin{align}
 Q_\mathrm{int}[\psi] \le & \:-\frac12\delta\nu + \big[ \xi_+\phi_+^2 - \xi_-\big] \|\chi_\mathrm{in}\|^2 - \big[\xi_+\phi_+^2 + \xi_-\big]\,a \int_\R \kappa(s)\, \D s \nonumber \\[.3em]
 & \quad + \frac12 (\phi_+^2-1) \int_\R \kappa(s)\, \D s + \tau_0^{-1} \|\phi_0\!\upharpoonright_{[-a,a]}\|^2 \|\chi'_\mathrm{in}\|^2. \label{inner}
\end{align}
Choosing \col{for a fixed $\nu\in(0,\nu_0)$ and $\tau_0=1-a\|\kappa\|_\infty$ the ratio} at the right-hand side of \eqref{mollif} \col{as $\frac{s^*}{s_0}\gg1$}, one can achieve that the last term in \eqref{inner} will be smaller than $\frac14\delta\nu$.

\col{This general estimate simplifies in various situations indicated above. In particular, the assumptions \ref{p2} and $V_0=0$ used in part (a) of Theorem~\ref{th:main2} imply $\phi_+=\phi_-=1$ and $\xi_-=-\xi_+$, so that \eqref{inner} becomes}
\begin{equation}\label{inner_sym}
 Q_\mathrm{int}[\psi] \le -\frac14\delta\nu - 2|\mu|^{1/2} \|\chi_\mathrm{in}\|^2
\end{equation}
\col{for all $\nu\in(0,\nu_0)$, the ratio $\frac{s^*}{s_0}$ large enough, and $s_0$ such that the points referring to the curved part of $\Gamma$ are inside $[-s_0,s_0]$. On the other hand, assumption \ref{p4} used in part (a) of Theorem~\ref{th:main1} together with the absence of the bias, $V_0=0$, means that $\xi_\pm=0$, so the second and the third term on the right-hand side of \eqref{inner} vanish, and since the fourth one is now supposed to be non-positive, the estimate reduces simply to $Q_\mathrm{int}[\psi] \le -\frac14\delta\nu$.

\col{\subsection{Completing the proof of part (a) Theorem~\ref{th:main1}}}
\label{ss:compl_2.2a}

\col{In view of the last observation, to conclude the proof of part (a) of Theorem~\ref{th:main1} we have thus} to choose the outer part of trial function in such a way that its contribution to the quadratic form can be made smaller than any fixed positive number. Under assumption \ref{p4}, $\phi_0$ is constant outside the potential support, $\phi_0(t)=\phi_\pm$ for $\pm t\ge a$; we recall that $\phi_-=1$ by assumption. We choose the trial function outside $\Omega^a$ as the $\phi_\pm$ multiplier of the} mollifier $\chi_\mathrm{out}$ \col{of which we require} the following properties:
 \begin{enumerate}[(i)]
 \setcounter{enumi}{0}
 \setlength{\itemsep}{1.5pt}
 \item in $\R^2\setminus\Omega^a$ the function \col{$\chi_\mathrm{out}$} depends on $\rho=\mathrm{dist}(x,O)$ only,
 \item we have continuity at the boundary: at the points $x(s,\pm a)$ the relation $\chi_\mathrm{out}(x)=\chi_\mathrm{in}(s)$ holds.
 \end{enumerate}
Let us consider the situation where the extensions of the asymptotes of $\Gamma$ cross; the case of parallel asymptotes pointing in the opposite directions can dealt with analogously. We again choose $s_0$ in such a way that the points $\Gamma(\pm s_0)$ belong to the straight parts of the curve, then $\mathrm{dist}(\Gamma(s),O) = \rho_s:= (|s|-s_0)+d_0$, where $d_0=\mathrm{dist}(\Gamma(s_0),O)$ (recall that $\Gamma(-s_0)=\Gamma(s_0)$ holds by assumption).

Given that the distance of the points $x(s,\pm a)$ from the origin is $\sqrt{\rho_s^2+a^2}$, in accordance with the requirements (i), (ii) we put
$$ 
 \chi_\mathrm{out}(\rho) := \left\{ \begin{array}{cl}  \chi_\mathrm{in}(\sqrt{\rho^2-a^2}-d_0+s_0) & \qquad\text{if}\;\: \sqrt{\rho^2-a^2}\ge d_0 \\[.5em] 1  & \qquad\text{if}\;\: \rho \le \sqrt{d_0^2+a^2} \end{array} \right.
$$ 
This, in particular means, that $\chi_\mathrm{out}$ vanishes if its argument exceeds~$s^*$, in other words, for $\rho>\sqrt{(s^*-s_0+d_0)^2+a^2}$. Since $\mu=0$ holds by assumption and the potential is zero away from $\Omega^a$, the quantity to be estimated is the kinetic energy contribution to the form \eqref{form} from the outer part of the trial function,
\begin{align}
 & \int_{\Omega\setminus\Omega^a} |\nabla\psi_\mathrm{out}(x)|^2 \D x \label{kinetout} \\[.3em]
 & \;\; \col{\,\le\,} 2\pi\,\col{\max\{\phi_+^2,1\}} \int_{\sqrt{d_0^2+a^2}}^{\sqrt{(s^*-s_0+d_0)^2+a^2}} \Big| \frac{\D}{\D\rho} \chi_\mathrm{in}(\sqrt{\rho^2-a^2}-d_0+s_0)\Big|^2 \!\rho\D\rho. \nonumber
\end{align}

Relation \eqref{inner_sym} tells us that one can choose parameters $\delta$ and $\nu$ for which the inner contribution to the form is negative (using a sufficiently large $s^*$), hence to prove the claim it is enough to show that the integral on the right-hand side of \eqref{kinetout} vanishes if $s_0,\,d_0\to\infty$ with the difference $s_0-d_0$ bounded and $\frac{s^*}{s_0}\to\infty$. The values of the integrated function on the support of $\nabla\psi_\mathrm{out}$ can be expressed using \eqref{mollif} to be
\begin{align*}
 & \Big| \frac{\D}{\D\rho} \chi_\mathrm{in}(\sqrt{\rho^2-a^2}-d_0+s)\Big|^2 = \Big| \Big(\ln\frac{s^*}{s_0} \Big)^{-1} \frac{1}{\sqrt{\rho^2-a^2}-d_0+s_0}\, \frac{\pd s}{\pd\rho} \Big|^2 \nonumber \\[.3em]
 & \hspace{10em} = \Big(\ln\frac{s^*}{s_0} \Big)^{-2}\, \big(\sqrt{\rho^2-a^2}-d_0+s_0\big)^{-2},
\end{align*}
because $\frac{\pd s}{\pd\rho}=1$ in the considered region. Substituting from here to \eqref{kinetout} we get
\begin{align*}
 & \int_{\Omega\setminus\Omega^a} |\nabla\psi_\mathrm{out}(x)|^2 \D x \nonumber \\[.3em] & \quad \col{\,\le\,} 2\pi\,\col{\max\{\phi_+^2,1\}} \Big(\ln\frac{s^*}{s_0} \Big)^{-2} \int_{\sqrt{d_0^2+a^2}}^{\sqrt{(s^*-s_0+d_0)^2+a^2}} \hspace{-.5em} \frac{\rho\D\rho}{(\sqrt{\rho^2-a^2}-d_0+s_0)^2} 
\end{align*}
Since $s_0-d_0$ is bounded and $a$ is fixed, we can choose a \col{$c\in(0,1)$} in such a way that
$$
\sqrt{\rho^2-a^2}-d_0+s_0 \ge \col{c\rho}
$$
holds for all \col{$\rho\ge\sqrt{d_0^2+a^2}$}, in which case we have
\begin{equation} \label{est3}
\int_{\Omega\setminus\Omega^a} |\nabla\psi_\mathrm{out}(x)|^2 \D x \le \col{2\pi c^{-2}}\,\col{\max\{\phi_+^2,1\}} \Big(\ln\frac{s^*}{s_0} \Big)^{-2} \ln\rho\,\Big|_{\sqrt{d_0^2+a^2}}^{\sqrt{(s^*-s_0+d_0)^2+a^2}}.
\end{equation}
Using again the fact that $s^*,\,d_0\to\infty$ while $a$ is fixed and $s_0-d_0$ bounded, we see that the parameters can be chosen so that
\begin{equation} \label{est4}
\ln\frac{\sqrt{(s^*-s_0+d_0)^2+a^2}}{\sqrt{d_0^2+a^2}} \le \col{\ln\frac{c^*s^*}{c_0 s_0} = \ln\frac{s^*}{s_0} +\ln c^* - \ln c_0}
\end{equation}
\col{for $c^*>1$ and $c_0\in(0,1)$. Substituting} from \eqref{est4} into \eqref{est3}, we get the needed result; this concludes the proof of part (a) of Theorem~\ref{th:main1}.


\col{\section{Completing the proof of part (a) of Theorem~\ref{th:main2}}}
\label{s: proof2}

Let us pass to the situation where there is again \emph{no bias}, $V_0=0$, the channel profile is \emph{symmetric}, and the transverse operator \eqref{profileSO} is \emph{subcritical}, $\mu<0$.

\col{\subsection{Trial function outside $\Omega_a$}
\label{ss:main1outside}

This is the most difficult part of the argument. For} the interior we can use the result of Sec.~\ref{ss:trial inside} noting that in view of the assumption \ref{p2} we have $\phi_+=1$ and $\xi_+=-\xi_-$ which means that the inequality \eqref{inner_sym} is still valid. To begin with, we \col{introduce} in $\Omega_\mathrm{out}$ function $\phi$ \col{defined as}
\begin{equation} \label{outer}
\phi(x) := \exp\{ -\xi(\mathrm{dist}(x,\Gamma)-a)\}, \quad x\in\R^2\setminus\Omega^a,
\end{equation}
where $\xi:=\xi_-=-\xi_+=|\mu|^{1/2}$. The sought trial function will be then of the form $\psi_\mathrm{out}=\phi\chi_\mathrm{out}$ with the mollifier $\chi_\mathrm{out}$ to be specified below. As before, we will focus on the situation where the asymptotes of $\Gamma$ are not parallel, the case with $\theta_0=\frac{\pi}{2}$ can be treated in a similar way.

Since $\theta_0>0$ by assumption, we can choose conical neighborhoods of the asymptotes which do not intersect, that is, to pick $\Delta\theta_0$ sufficiently small so that $[-\theta_0-\Delta\theta_0,-\theta_0+\Delta\theta_0] \cap [\theta_0-\Delta\theta_0,\theta_0+\Delta\theta_0] = \emptyset$. Furthermore, we pick an $r_0>0$ large enough to ensure that the curved part of $\Gamma$ is contained in the disk of one half that radius, $B_{\frac12 r_0}(O)$, centered at the coordinate origin $O$. \col{In addition, we assume that $s_0$ in \eqref{mollif} is sufficiently large so that the parts of $\Gamma$ with $|s|>s_0$ are outside the disk $B_{r_0}(O)$ of the doubled radius.} At the points of the corresponding conical sectors, $x=(\rho,\theta)\in \R^2\setminus B_{r_0}(O)$ with $\theta\in[\theta_0-\Delta\theta_0,\theta_0+\Delta\theta_0]$ or $\theta\in[-\theta_0-\Delta\theta_0,-\theta_0+\Delta\theta_0]$ we can use the $(s,t)$ coordinates \col{-- cf. Fig.~3 --} and define the mollifier $\chi_\mathrm{out}$ depending on the longitudinal variable only,
$$
\chi_\mathrm{out}(s,t) = \chi_\mathrm{in}(s),
$$
where the right-hand side is given by \eqref{mollif}. Furthermore, at the points $x\in B_{r_0}(O)\setminus\Omega^a$ we put $\chi_\mathrm{out}(x) = 1$, and finally, in the remaining part of the plane we choose $\chi_\mathrm{out}$ independent of $\theta$, in other words, as a function of the distance $\rho$ from the origin $O$ only, and such that $\chi_\mathrm{out}$ is continuous in $\Omega_\mathrm{out}$. It is clear that the radial decay of such an external mollifier is determined by the behavior of the function \eqref{mollif}.
\begin{figure}[h!]
\centering
    \includegraphics[clip, trim=7cm 13cm 6cm 4cm, angle=0, width=0.5\textwidth]{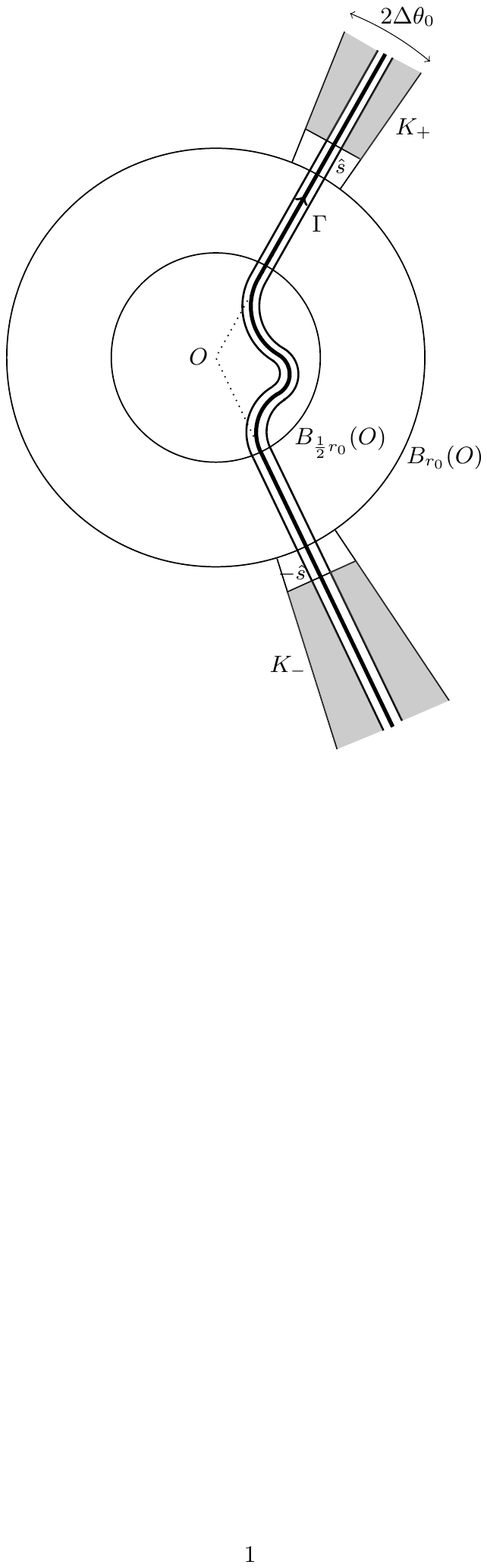}
\caption{The regions used in the proof of Theorem~\ref{th:main2}}
\end{figure}

Since the potential is supported in $\Omega^a$, the contribution to the quadratic form \eqref{form} in the exterior region comes from the kinetic term only. The trial function factorizes into a product and our first goal is to show that the cross-term containing the integral of $2\nabla\phi \cdot \nabla\chi_\mathrm{out}$ is small for large $r_0$, in particular, that one can make it smaller than $\frac{1}{16}\delta\nu$ with respect to the quantities appearing in \eqref{inner_sym}.

\begin{lemma} \label{l:smallcross}
We have
\begin{align}
& \int_{\Omega_\mathrm{out}} |\nabla\psi_\mathrm{out}(x)|^2 \D x \le \int_{\Omega_\mathrm{out}} |\nabla\phi(x)|^2 \chi^2_\mathrm{out}(x)\, \D x \nonumber \\[.3em] &  \qquad + \int_{\Omega_\mathrm{out}} |\phi(x)|^2 |\nabla\chi_\mathrm{out}(x)|^2 \D x + \OO(r_0^{-1}) \quad\text{as}\;\; r_0\to\infty.
\label{crossterm}
\end{align}
\end{lemma}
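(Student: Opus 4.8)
The plan is to expand the square pointwise,
\[
  |\nabla\psi_\mathrm{out}|^2 = \chi_\mathrm{out}^2\,|\nabla\phi|^2 + \phi^2\,|\nabla\chi_\mathrm{out}|^2 + 2\phi\,\chi_\mathrm{out}\,\nabla\phi\cdot\nabla\chi_\mathrm{out},
\]
so that \eqref{crossterm} is equivalent to the bound $|I|=\OO(r_0^{-1})$ for the cross-term $I:=\int_{\Omega_\mathrm{out}} 2\phi\,\chi_\mathrm{out}\,\nabla\phi\cdot\nabla\chi_\mathrm{out}\,\D x$. I would split $\Omega_\mathrm{out}$ according to the three regions on which $\chi_\mathrm{out}$ was defined above: (a) the inner region $B_{r_0}(O)\setminus\Omega^a$, where $\chi_\mathrm{out}\equiv1$; (b) the two conical sectors around the asymptotic directions $\pm\theta_0$, with $\Omega^a$ removed; and (c) the remaining `gap' region, which is contained in $\R^2\setminus B_{r_0}(O)$ and whose points are angularly at least $\Delta\theta_0$ away from both $\pm\theta_0$.

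On region (a) we have $\nabla\chi_\mathrm{out}=0$, so there is no contribution to $I$. On region (b) the decisive observation is that, by the choices of $\Delta\theta_0$, $r_0$ and $s_0$ fixed before the statement, the part of $\Gamma$ met in such a sector is one of its straight arms, along which $\kappa\equiv0$; hence the map \eqref{strip} extended to all values of $t$ is a rigid motion of the plane, $(s,t)$ is an orthonormal Cartesian frame there, and $\mathrm{dist}(x,\Gamma)=|t|$. In these coordinates $\phi=\exp\{-\xi(|t|-a)\}$ depends on $t$ alone while $\chi_\mathrm{out}=\chi_\mathrm{in}(s)$ depends on $s$ alone, so that $\nabla\phi\cdot\nabla\chi_\mathrm{out}=\pd_s\phi\,\pd_s\chi_\mathrm{out}+\pd_t\phi\,\pd_t\chi_\mathrm{out}=0$ pointwise; the contribution of the conical sectors to $I$ thus vanishes identically.

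It remains to estimate the contribution of the gap region (c). Here I would first record that $\mathrm{dist}(x,\Gamma)\ge c\,\rho$ with $\rho:=\mathrm{dist}(x,O)$ and a constant $c=c(\Delta\theta_0)\in(0,1)$: the set $\Gamma$ lies inside $B_{\frac12 r_0}(O)$ together with the two straight rays in the directions $\pm\theta_0$, and a point with $\rho\ge r_0$ that is angularly separated from both of those directions by at least $\Delta\theta_0$ has distance at least $\rho\sin\Delta\theta_0$ from each of the rays and at least $\rho-\frac12 r_0\ge\frac12\rho$ from the disk. Consequently $\phi(x)\le\ee^{\xi a}\ee^{-\xi c\rho}$, and since the distance function is $1$-Lipschitz, $|\nabla\phi|\le\xi\phi$ a.e.; using in addition $\chi_\mathrm{out}\le1$ and the fact that $|\nabla\chi_\mathrm{out}|$ grows at most polynomially in $\rho$ (it is in fact $\OO(\rho^{-1})$ on the transition zone where it is nonzero, by \eqref{mollif}), the part of $I$ coming from region (c) is bounded in absolute value by $C\int_{r_0}^\infty \ee^{-2\xi c\rho}\,p(\rho)\,\D\rho$ with a polynomial $p$, hence is exponentially small in $r_0$ and a fortiori $\OO(r_0^{-1})$. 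Adding the three contributions yields \eqref{crossterm}.

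The estimate itself is not the delicate point; once the orthogonality in the conical sectors is recognised, everything else is routine. The main obstacle is the geometric bookkeeping that makes that coordinate picture legitimate: one must check that the conical sectors, with $\Delta\theta_0$ fixed, really meet only the straight arms of $\Gamma$ once $r_0$ is large, that the tubular neighbourhood $\Omega^a$ of each arm is contained in the corresponding sector for $\rho\ge r_0$, and that for $x$ in a sector the nearest point of $\Gamma$ lies on that arm rather than on the other arm or on the curved core — all of which is secured by taking $r_0$ large (and then $s_0$, $s^*$ accordingly), in the order already arranged above, together with the non-self-intersection hypothesis~\ref{s3}.
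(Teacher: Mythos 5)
Your proposal is correct and follows essentially the same route as the paper's proof: the decomposition into the disk $B_{r_0}(O)$ (where $\nabla\chi_\mathrm{out}=0$), the conical sectors (where the nearest point of $\Gamma$ lies on a straight arm, so $\nabla\phi\perp\nabla\chi_\mathrm{out}$ and the cross-term vanishes), and the remaining region (where the exponential decay of $\phi$, $|\nabla\phi|\le\xi\phi$, and the boundedness of $\nabla\chi_\mathrm{out}$ give an error that is in fact exponentially small, hence $\OO(r_0^{-1})$) is exactly the argument in the text. Your explicit check that the sector indeed sees only the straight arm (distance at most $\rho\sin\Delta\theta_0$ versus at least $\rho-\tfrac12 r_0$) is the same bookkeeping the paper performs.
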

\begin{proof}
Since $\psi_\mathrm{out}=\phi\chi_\mathrm{out}$ for $x\in\Omega_\mathrm{out}$, we have to estimate the integral $\int_{\Omega_\mathrm{out}} |\nabla\phi(x) \cdot \nabla\chi_\mathrm{out}(x)|\,\D x$ to deal with the cross-term. To this aim we first note that \mbox{$\chi_\mathrm{out}=1$} holds inside $B_{r_0}(O)$ so we have to consider only the complement of the disk. In the conical sectors of $\R^2\setminus B_{r_0}(O)$ with $[\pm\theta_0-\Delta\theta_0,\pm\theta_0+\Delta\theta_0]$ the point nearest to $(x,\theta)$ lies on the straight part of $\Gamma$, as the distance to it is at most $\rho\Delta\theta<\frac12\rho$ while that to the curved part is at least $\rho-\frac12 r_0>\frac12\rho$. This implies that $\nabla\phi$ is perpendicular to $\nabla\chi_\mathrm{out}$, and the corresponding contribution to the integral vanishes too. Finally, in view of our definition of $\chi_\mathrm{out}$ in combination with \eqref{mollif} we see that $\nabla\chi_\mathrm{out}$ is bounded outside $B_{r_0}(O)$ and the two sectors, and furthermore, we have $|\nabla\phi(x)| \le \xi\phi(x)\le C\, \ee^{-\rho/2}$ which yields
$$ 
\int_{\Omega_\mathrm{out}} |\nabla\phi(x) \cdot \nabla\chi_\mathrm{out}(x)|\,\D x \le C' \int_{\Omega_\mathrm{out}} |\nabla\phi(x)|\,\D x = \OO(r_0^{-1})
$$ 
as $r_0\to\infty$ which we set out to prove.
\end{proof}

Let us turn to the second term on the right-hand side of \eqref{crossterm}.
\begin{lemma} \label{l:smallsecond}
We have
\begin{equation} \label{smallsecond}
\int_{\Omega_\mathrm{out}} |\phi(x)|^2 |\nabla\chi_\mathrm{out}(x)|^2 \D x = \OO(r_0^{-1}) \quad\text{as}\;\; r_0\to\infty.
\end{equation}
\end{lemma}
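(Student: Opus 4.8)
The plan is to use that, by construction, $\chi_\mathrm{out}\equiv 1$ on $B_{r_0}(O)$, so that $\nabla\chi_\mathrm{out}$ is supported in $\Omega_\mathrm{out}\setminus B_{r_0}(O)$, and to split this region into two parts: the two conical sectors around the asymptotes, where $\chi_\mathrm{out}(s,t)=\chi_\mathrm{in}(s)$, and the remaining `gap', where $\chi_\mathrm{out}$ is a function of the radial variable $\rho$ alone. I would estimate these two contributions separately.

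On the sectors the relevant portion of $\Gamma$ is straight, hence $\kappa=0$ there, the parallel coordinates $(s,t)$ provide an isometric parametrization with unit Jacobian, and one has $\dist(x,\Gamma)=|t|$, so that $|\phi(x)|^2=\ee^{-2\xi(|t|-a)}$ by \eqref{outer}, while $|\nabla\chi_\mathrm{out}(x)|^2=|\chi'_\mathrm{in}(s)|^2$ because $\chi_\mathrm{out}$ depends on $s$ only. Integrating first in $t$ over the set $|t|\ge a$ and using $\int_{|t|\ge a}\ee^{-2\xi(|t|-a)}\,\D t=\xi^{-1}$, the two sectors together contribute at most $2\xi^{-1}\|\chi'_\mathrm{in}\|_{L^2(\R)}^2$. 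From \eqref{mollif} one has $\|\chi'_\mathrm{in}\|_{L^2(\R)}^2=2\big(\ln\frac{s^*}{s_0}\big)^{-2}\big(\frac{1}{s_0}-\frac{1}{s^*}\big)$, and since $\Gamma$ is straight and of unit speed outside a compact, the requirement that the points with $|s|>s_0$ lie outside $B_{r_0}(O)$ forces $s_0\ge c\,r_0$ for a fixed $c>0$; hence this term is $\OO(r_0^{-1})$.

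On the gap, that is, the part of $\Omega_\mathrm{out}\setminus B_{r_0}(O)$ lying outside the two sectors, the exponential decay of $\phi$ does all the work. A point $x=(\rho,\theta)$ there satisfies $\rho>r_0$ with $\theta$ bounded away from $\pm\theta_0$; since the curved part of $\Gamma$ lies in $B_{r_0/2}(O)$ and the straight rays are contained in the excluded conical neighbourhoods, a short geometric argument yields $\dist(x,\Gamma)\ge c'\rho$ with a fixed $c'>0$ depending only on $\Delta\theta_0$, whence $|\phi(x)|^2\le\ee^{2\xi a}\ee^{-2\xi c'\rho}$. On the other hand $\chi_\mathrm{out}$ is there a function of $\rho$ alone with bounded derivative (indeed $|\chi'_\mathrm{out}(\rho)|\le C r_0^{-1}$, inherited from $\chi'_\mathrm{in}$), so passing to polar coordinates this contribution is at most $2\pi C^2\ee^{2\xi a}\int_{r_0}^\infty\rho\,\ee^{-2\xi c'\rho}\,\D\rho$, which is exponentially small in $r_0$ and in particular $\OO(r_0^{-1})$. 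Adding the two pieces gives \eqref{smallsecond}.

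The routine ingredients here are the one-dimensional integrals and the bookkeeping of $\chi_\mathrm{in}$; the one point I would write out with care is the pair of distance estimates — that on the conical sectors the nearest point of $\Gamma$ indeed lies on the straight asymptote (so that $\dist(x,\Gamma)=|t|$ and the flat metric may be used) and that on the gap $\dist(x,\Gamma)$ grows linearly in $\rho$. Both reduce to comparing the distance from $x$ to the curved part, which is at least $\rho-\frac{1}{2}r_0>\frac{1}{2}\rho$ when $\rho>r_0$, with the distance to the straight parts, exploiting the angular separation $\Delta\theta_0$ of the conical neighbourhoods; this comparison, together with the resulting constraint linking $r_0$, $\Delta\theta_0$ and $s_0$, is the main — though only mild — obstacle.
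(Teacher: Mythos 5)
Your proof is correct and follows essentially the paper's own route: zero contribution from $B_{r_0}(O)$, an exponential-decay estimate (as in Lemma~\ref{l:smallcross}) for the region outside the conical sectors, and a reduction to $\|\chi'_\mathrm{in}\|^2$ inside the sectors. The only cosmetic difference is the final bookkeeping — you get $\OO(r_0^{-1})$ from $s_0\gtrsim r_0$, while the paper instead picks $s^*=s^*(r_0)$ with $\ln\frac{s^*}{s_0}>Cr_0$ — which is an equally valid way to close the estimate.
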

\begin{proof}
The integral over the disk \col{$B_{r_0}(O)$} is again zero and using an argument analogous to that of the previous proof, one can check that the integral over the region outside the conical sectors is $\OO(r_0^{-1})$ as $r_0\to\infty$. Inside the sectors we have
$$
|\phi(x)|^2 |\nabla\chi_\mathrm{out}(x)|^2 = |\phi_0(t)|^2 |\chi'_\mathrm{in}(s)|^2 \le |\chi'_\mathrm{in}(s)|^2
$$
with $\chi_\mathrm{in}$ given by \eqref{mollif}; recall that outside $\Omega^a$ the function $\phi_0$ decays exponentially with the distance from $\Gamma$ and $\phi_0(\pm a)=1$ holds by assumption. Hence the integral in \eqref{smallsecond} can be estimated by the squared norm of $\chi'_\mathrm{in}$, and since to a given $r_0$ the value of \col{$s_0$ was chosen so that the parts of $\Gamma$ with $|s|>s_0$ lay outside $B_{r_0}(O)$, one can pick an} $s^*=s^*(r_0)$ in such a way that $\ln\frac{s^*}{s_0} > Cr_0$ for some $C>0$, \col{and} the claim follows.
\end{proof}

\col{Combining these two lemmata we see that for any fixed $\nu\in(0,\nu_0)$ one can choose $r_0,\,s_0$, and $\frac{s^*}{s_0}$ sufficiently large to satisfy the inequality}
\begin{equation} \label{est5}
\int_{\Omega_\mathrm{out}} |\nabla\psi_\mathrm{out}(x)|^2 \D x \le \int_{\Omega_\mathrm{out}} |\nabla\phi(x)|^2 \chi^2_\mathrm{out}(x)\, \D x + \frac{|\mu|}{8} \delta\nu.
\end{equation}
Next we note that in part (a) of Theorem~\ref{th:main2} the bias is absent, $V_0=0$, which means that the function \eqref{outer} satisfies
$$
|\nabla\phi|^2-\mu|\phi|^2 = 2|\nabla\phi|^2
$$
almost everywhere in $\Omega_\mathrm{out}$. This means that we can estimate the whole exterior contribution to the form $Q[\psi]-\mu\|\psi\|^2$ by doubling the kinetic term and neglecting the one containing the eigenvalue $\mu$. \col{Combining then the estimates \eqref{inner_sym} and \eqref{est5} we further infer} that in order to prove the theorem it is sufficient to check that
$$ 
2\int_{\Omega_\mathrm{out}} |\nabla\phi(x)|^2 \chi^2_\mathrm{out}(x)\, \D x \le 2|\mu|^{1/2} \|\chi_\mathrm{in}\|^2 + \frac{|\mu|}{8} \delta\nu
$$ 
\col{holds for some $\nu\in(0,\nu_0)$ and sufficiently large $r_0,\, s_0$ and $\frac{s^*}{s_0}$ depending on the values of $\delta$ and $\nu$. Moreover, this} is in view of $|\nabla\phi|^2=-\mu|\phi|^2$ further equivalent to
\begin{equation} \label{est7}
\int_{\Omega_\mathrm{out}} |\phi(x)\chi_\mathrm{out}(x)|^2\, \D x \le |\mu|^{-1/2} \|\chi_\mathrm{in}\|^2 + \frac1{16} \delta\nu.
\end{equation}

The rest of the proof consists of verification of the inequality \eqref{est7}. To begin with, we estimate the contribution to its left-hand side from the parts of the plane adjacent to the straight parts of the waveguide; we choose them as conical sectors similar to those used in the proof of Lemma~\ref{l:smallcross}. We recall that for $x=(\rho,\theta)$ with $\rho\ge r_0$ and $\theta\in[\pm\theta_0-\Delta\theta_0,\pm\theta_0+\Delta\theta_0]$ we can use the $(s,t)$ coordinates simultaneously with the polar ones. We choose an $\hat{s}\ge r_0$ so that the parts of $\Gamma$ with $|s|\ge\hat{s}$ lay outside $B_{r_0}(O)$, and at the same time we choose $s_0$ of \eqref{mollif} is such a way that $s_0>\hat{s}$. Then we define
\begin{equation} \label{Ksector}
K_\pm:= \big\{\, x:\: |s|\ge\hat{s},\, |t|\ge a,\, \theta\in[\pm\theta_0-\Delta\theta_0,\pm\theta_0+\Delta\theta_0]\,\big\}
\end{equation}
Within these sets, the closest points of $\Gamma$ are those on the straight parts of the curve with the same coordinate $s$. Then it is easy to see that
\begin{equation} \label{est8}
\int_{\Omega_\mathrm{out}\cap\{K_+\cup K_-\}} |\phi(x)\chi_\mathrm{out}(x)|^2\, \D x \le |\mu|^{-1/2} \|\chi_\mathrm{in}\|^2_{L^2((-\infty,-\hat{s}]\cup [\hat{s},\infty))}
\end{equation}

It remains to integrate the function $|\phi\chi_\mathrm{out}|^2$ over $\Omega_\mathrm{out}\setminus\{K_+\cup K_-\}$. Obviously, the integral will increase if we replace $\chi_\mathrm{out}$ by one, hence
to complete the proof, it is in view of \eqref{inner_sym} and \eqref{est8} enough to check \col{the following inequality for the fixed $\delta$ given by our choice of the function $g$, some $\nu\in(0,\nu_0)$, $r_0>0$ and $\Delta\theta_0>0$,}
\begin{equation} \label{est9}
\int_{\Omega_\mathrm{out}\setminus\{K_+\cup K_-\}} |\phi(x)|^2\, \D x \le 2\hat{s}\,|\mu|^{-1/2} + \frac1{16} \delta\nu\,;
\end{equation}
we have used here the fact that $\|\chi_\mathrm{in}\|^2_{L^2((-\hat{s},\hat{s}))} = 2\hat{s}$. \col{Let us note that it is easy to see that if \eqref{est9} holds for some $r_0$, the same is true for any $r'_0>r_0$.}

\col{\subsection{Curves with a piecewise constant curvature}
\label{ss:piecewise}

We divide the verification of inequality \eqref{est9} into two parts, considering first a particular class of the generating curves assuming additionally that
 \begin{enumerate}[(s1)]
 \setlength{\itemsep}{1.5pt}
 \setcounter{enumi}{3}
\item \col{$\Gamma$ is a $C^1$ curve consisting of two halflines and a} \emph{finite array of circular arcs;} \col{we allow some of these arcs to be straight segments of a finite length.} \label{s4}
 \end{enumerate}
Consequently, the signed curvature $\kappa(\cdot)$ of such a curve is a step function, \col{being zero on the straight parts of $\Gamma$}.} To estimate the indicated integral \col{under} the additional assumption \ref{s4}; the two part of $\Gamma$ corresponding to $|s|>\hat{s}$ will be considered as arcs of zero curvature, cf.~Remark~\ref{rem:segment} below.
First of all, we note that the function $d_x:\R\to\R_+$, defined by $d_x(s):=\mathrm{dist}(x,\Gamma(s))$, is $C^1$ smooth for any $x\in\R^2$, and under the the assumption \ref{s4} it is piecewise monotonous because on each arc it can have at most one extremum. At the same time, $d_x(s)\to\infty$ holds as $s\to\pm\infty$, hence the function has a global minimum, positive as long as $x$ does not lie on the curve, and in view of its continuity it may also have a finite number of local extrema. \col{Among those, the extrema with largest and smallest value of $s$ are clearly both minima. As a result, the numbers of local extrema with $s<s_x^0$ and $s>s_x^0$ are both even; they come in pairs of an adjacent minimum and maximum. Let $s_x^1$ and $s_x^2$ be the coordinates of such a pair, respectively, then we have obviously $d_x(s_x^1) < d_x(s_x^2)$, and consequently
\begin{equation} \label{est10a}
\exp\{-2\xi(d_x(s_x^1)-a)\} - \exp\{-2\xi(d_x(s_x^2)-a)\} \ge 0.
\end{equation}
Let us denote by $M_x^\uparrow$ the subset of coordinates of the \emph{local} maxima and by $M_x^\downarrow$ the subset of coordinates of \emph{all} the minima. Summing the inequalities \eqref{est10a} over the pairs of \emph{local} extrema, we get
$$ 
\sum_{\scriptsize\begin{array}{c}s_x^i\in M_x^\downarrow\\ s_x^i\ne s_x^0 \end{array}} \exp\{-2\xi(d_x(s_x^i)-a)\} - \sum_{s_x^i\in M_x^\uparrow} \exp\{-2\xi(d_x(s_x^i)-a)\} \ge 0.
$$ 
Adding finally $\exp\{-2\xi(d_x(s_x^0)-a)\}$ to both sides of this inequality, we arrive at}
\begin{align}
\exp\{-2\xi(d_x(s_x^0)-a)\} \le & -\!\!\sum_{s_x^i\in M_x^\uparrow} \exp\{-2\xi(d_x(s_x^i)-a)\} \nonumber \\ & +\!\! \sum_{s_x^i\in M_x^\downarrow} \exp\{-2\xi(d_x(s_x^i)-a)\}, \label{est10}
\end{align}
for all $x\in\Omega_\mathrm{out}$, \col{where the last sum now runs over all the minima}. To estimate the integral in \eqref{est9}, we have to integrate the right-hand side of \eqref{est10} over $\Omega_\mathrm{out}\setminus\{K_+\cup K_-\}$. To this aim, let us first collect several simple geometric statements easy to check:
\begin{figure}[h!]
\centering
    \includegraphics[clip, trim=7cm 20cm 7cm 4cm, angle=0, width=0.8\textwidth]{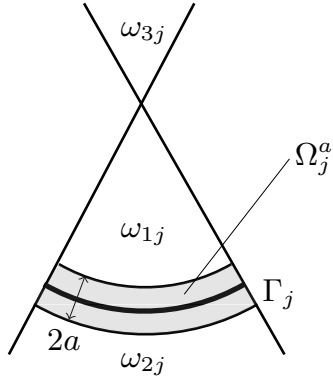}
\caption{The regions appearing in Proposition~\ref{p:geometry}}
\end{figure}
\begin{proposition} \label{p:geometry}
Let $\Gamma_j$ be one the arcs of $\Gamma$ and denote by $\omega_{1j}, \omega_{2j}, \omega_{3j}$ and $\Omega^a_j$ the open regions shown in Fig.~4. Then the following holds true:
\begin{enumerate}[(i)]
\item If $x\in\omega_{1j}\cup\omega_{2j}$, then $d_x(\cdot)$ has a minimum in the interior of~$\Gamma_j$.
\item If $x\in\omega_{3j}$, then $d_x(\cdot)$ has a maximum in the interior of~$\Gamma_j$.
\item $x\not\in\bar{\omega}_{1j}\cup\bar{\omega}_{2j}\cup\bar{\omega}_{3j}\cup \bar{\Omega}^a_j$, then $d_x(\cdot)$ has no extremum on~$\Gamma_j$.
\item $d_x(\cdot)$ cannot have more than one critical point in the interior of~$\Gamma_j$.
\item If $x\in\omega_{kj}$ for any of $k=1,2,3$, then the one-sided derivative $d_x'(s)\ne 0$ at the endpoints of $\Gamma_j$.
\end{enumerate}
\end{proposition}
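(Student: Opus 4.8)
The plan is to reduce the four items of Proposition~\ref{p:geometry} to the elementary geometry of the distance from a point to a circle. Fix an arc $\Gamma_j$ of $\Gamma$. Under \ref{s4} it is a sub-arc of a circle $C_j=\pd B_{R_j}(O_j)$ of radius $R_j=1/|\kappa_j|>a$ (the strict inequality because $a\|\kappa\|_\infty<1$), unless $\kappa_j=0$, in which case $\Gamma_j$ is a segment of a line $\ell_j$; the two halflines of $\Gamma$ with $|s|>\hat s$ are treated in the same way as the latter. Consider first the circular case and a point $x\ne O_j$; writing $r:=|x-O_j|$ and letting $\alpha_x$ denote the angular coordinate of $x$ about $O_j$, the squared distance from $x$ to the point of $C_j$ with angular coordinate $\alpha$ equals
\[
 r^2+R_j^2-2rR_j\cos(\alpha-\alpha_x).
\]
Hence on the whole of $C_j$ the function $d_x$ has exactly two critical points: the \emph{near point} $N_j(x)$, where the ray from $O_j$ through $x$ meets $C_j$, at which $d_x=|r-R_j|$ is the global minimum on $C_j$, and the diametrically opposite \emph{far point} $F_j(x)$, at which $d_x=r+R_j$ is the global maximum. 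In the straight case $d_x(s)^2$ is a strictly convex quadratic in $s$, so $d_x$ has a single critical point, the foot of the perpendicular from $x$ to $\ell_j$, which is a minimum, and no maximum.

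Next, refining the decomposition in \ref{s4} if necessary -- this changes neither $\Gamma$ nor the validity of \ref{s4}, only increases the still finite number of arcs, and is harmless for the later summation over arcs -- we may and shall assume that every circular $\Gamma_j$ subtends at its centre an angle at most $\pi$. With this convention claim (iv) is immediate: an open circular arc subtending an angle $\le\pi$ contains no pair of diametrically opposite points, so at most one of $N_j(x),F_j(x)$ can lie in the interior of $\Gamma_j$; in the straight case there is only one critical point at all. The sole degenerate configurations are $x=O_j$, where $d_x$ is constant on $C_j$, and the absence of a far point for a straight piece; both are covered by reading ``extremum'' in (iii) as \emph{strict} local extremum, since the single point $O_j$ lies in none of the regions named there while $d_x$ has no strict extremum on $\Gamma_j$.

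The remaining claims are then read off from Fig.~4. By construction $\omega_{1j}\cup\omega_{2j}$ is exactly the open angular sector with apex $O_j$ subtended by the \emph{interior} of $\Gamma_j$, with the closed strip $\overline{\Omega^a_j}$ removed -- in the straight case, the open slab orthogonal to $\ell_j$ over the interior of $\Gamma_j$, minus $\overline{\Omega^a_j}$ -- so $x\in\omega_{1j}\cup\omega_{2j}$ forces the angular coordinate of $x$ about $O_j$ into the open angular range of $\Gamma_j$, hence $N_j(x)$ into the interior of $\Gamma_j$; this is (i). Likewise $\omega_{3j}$ is the sector obtained by reflecting the previous one through $O_j$ (and is empty in the straight case), i.e. precisely the set of $x$ with $F_j(x)$ in the interior of $\Gamma_j$; this is (ii). For (iii) we use that the closed angular sector with apex $O_j$ over the closed arc $\overline{\Gamma_j}$ is contained in $\overline{\omega_{1j}\cup\omega_{2j}\cup\Omega^a_j}$ and its reflection through $O_j$ in $\overline{\omega_{3j}}$; hence if $d_x$ had an extremum on $\overline{\Gamma_j}$, one of $N_j(x),F_j(x)$ would lie on $\overline{\Gamma_j}$ and $x$ would lie in one of those two closed sets, contrary to the hypothesis. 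Finally, if $x\in\omega_{kj}$ with the open sign then the relevant critical point ($N_j(x)$ for $k\in\{1,2\}$, $F_j(x)$ for $k=3$) lies in the \emph{open} arc, so it is distinct from both endpoints; since $d_x'$ vanishes on $\overline{\Gamma_j}$ only at $N_j(x)$ and $F_j(x)$, its one-sided values at the endpoints of $\Gamma_j$ are nonzero, which is (v).

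The only step that genuinely calls for care is (iv): the ``near point or far point'' dichotomy produces two interior critical points as soon as an arc is allowed to exceed a half-turn, and that is exactly why the preliminary refinement of the decomposition into arcs of angular span at most $\pi$ is needed; once this is in force, and trivially for all the straight pieces, the bound ``at most one interior critical point'' is automatic. Everything else is routine planar geometry -- matching the regions of Fig.~4 with the sectors (or slabs) subtended by $\Gamma_j$ and their reflections through $O_j$, and reading the position of $N_j(x)$ and $F_j(x)$ relative to $\Gamma_j$ off the membership of $x$ in those regions.
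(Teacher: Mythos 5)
Your proof is correct, and it is worth noting that the paper itself offers no argument at all for this proposition -- it is introduced as ``simple geometric statements easy to check'' and used directly in the estimates \eqref{est11}--\eqref{est12}. Your elaboration via the law-of-cosines formula for the distance to a circle, with the near point $N_j(x)$ and far point $F_j(x)$ as the only critical points of $d_x$ on the full circle $C_j$, is exactly the kind of elementary verification the authors presuppose, and your identification of $\omega_{1j},\omega_{2j}$ with the two components of the sector over $\Gamma_j$ minus $\overline{\Omega^a_j}$ and of $\omega_{3j}$ with the reflected sector is consistent with how these regions are integrated over later (e.g.\ \eqref{int2} and the $\omega_{3j}$ integral). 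The genuinely valuable point you add is the observation that claim (iv) fails verbatim for an arc subtending more than $\pi$, since then both $N_j(x)$ and $F_j(x)$ can be interior to $\Gamma_j$ for suitable $x$; such arcs are not excluded by \ref{s1}--\ref{s4}, so your preliminary refinement of the decomposition into sub-arcs of angular span at most $\pi$ is a real (and harmless) repair -- harmless because every later step (the pairing of extrema, the per-arc integrals, and the final summation giving $|\Gamma_j|\xi^{-1}$) is insensitive to refining the partition, and in the approximation step of Sec.~4.3 the arcs are short anyway. One small slip: the degenerate point $x=O_j$ does \emph{not} ``lie in none of the regions named'' in (iii) -- it is the apex of $\omega_{1j}$ and of $\omega_{3j}$, hence belongs to $\bar\omega_{1j}\cup\bar\omega_{3j}$ -- but this only works in your favour, since the hypothesis of (iii) then excludes $x=O_j$ outright and no reinterpretation of ``extremum'' is needed; for (iv) the constancy of $d_x$ on $\Gamma_j$ at this single exceptional $x$ is immaterial for the measure-theoretic use made of the proposition.
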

\begin{remark} \label{rem:segment}
With an abuse of terminology we include into \ref{s4} also situations when a $\Gamma_j$ is a straight segment, that is, $\kappa(s)=0$ holds on $\Gamma_j$. In that case the wedge-shaped regions $\omega_{1j}$ and $\omega_{2j}$ become semi-infinite strips and $\omega_{3j}$ does not exist. This concerns, in particular, the two straight parts of $\Gamma$ corresponding to $|s|>\hat{s}$.
\end{remark}

Within the regions we introduced the minimal and maximal distances are easily expressed; we have
\begin{align}
d_x(s_x^i) = \mathrm{dist}(x,\Gamma_j) & \quad\text{if}\;\; s_x^i\in \Gamma_j\cap M_x^\downarrow, \nonumber \\[-.75em] \label{est11} \\[-.75em]
d_x(s_x^i) = |\kappa_j|^{-1}+\mathrm{dist}(x,O_j) & \quad\text{if}\;\; s_x^i\in \Gamma_j\cap M_x^\uparrow, \nonumber
\end{align}
where $O_j$ is the center of the corresponding circular arc.

Let $\iota_j^{1,2}$ and $\iota_j^3$ be the characteristic functions of the sets $\omega_{1j}\cup\omega_{2j}$ and $\omega_{3j}$, respectively. In view of of Proposition~\ref{p:geometry} and relations \eqref{est11}, we can replace the first term at the right-hand side of \eqref{est10}, everywhere except the zero measure set referring to the boundaries of the regions $\omega_{kj},\,k=1,2,3$, with
\begin{equation} \label{est-sec3}
- \sum_j \exp\{-2\xi(|\kappa_j|^{-1}+\mathrm{dist}(x,O_j)-a)\}\iota_j^3(x)
\end{equation}
and the second term similarly by
\begin{equation} \label{est-sec12}
\sum_j \exp\{-2\xi(\mathrm{dist}(x,\Gamma_j)-a)\} \iota_j^{1,2}(x)
\end{equation}
Integrating now \eqref{est-sec3} and \eqref{est-sec12} over $\Omega_\mathrm{out}\setminus\{K_+\cup K_-\}$ and exchanging the order of integration over $x$ with summation over $j$, we can using \eqref{est10} estimate \\ $\int_{\Omega_\mathrm{out}\setminus\{K_+\cup K_-\}} \exp\{-2\xi(d_x(s_x^0)-a)\}\,\D x$ from above by
\begin{align}
& \sum_j \int_{(\omega_{1j}\cup\omega_{2j})\cap\{\Omega_\mathrm{out}\setminus\{K_+\cup K_-\}\}} \exp\{-2\xi(\mathrm{dist}(x,\Gamma_j)-a)\} \,\D x \label{est12} \\
& - \sum_j \int_{\omega_{3j}\cap\{\Omega_\mathrm{out}\setminus\{K_+\cup K_-\}\}} \exp\{-2\xi(|\kappa_j|^{-1}+\mathrm{dist}(x,O_j)-a)\} \,\D x, \nonumber
\end{align}
where the sums run over all the indices of $\Gamma_j$ including those of the straight segments of the curve with $|s|>\hat{s}$. Note that this estimate includes in general a double counting since the same $x$ may belong to different $\omega_{kj}$; this does not matter as long as we consider the contributions referring of a given $\Gamma_j$ together.

Our next goal is to show that the expression \eqref{est12} cannot decrease if we replace the integration domains by $(\omega_{1j}\cup\omega_{2j}) \setminus\{K_+\cup K_-\}$ and $\omega_{3j}\setminus\{K_+\cup K_-\}$, respectively. To this aim, consider a fixed arc $\Gamma_{j_0}$ and the respective segment $\Omega^a_{j_0}$ of the strip $\Omega^a$ as indicated in Fig.~4. For a point $x\in\Omega^a_{j_0}$ the function $d_x(\cdot)$ has the global minimum on $\Gamma_{j_0}$ with a coordinate $s_x^0$ and all the local extrema, if they exist, come in pairs situated outside $\Gamma_{j_0}$. This yields the estimate
$$ 
0 \le -\!\!\sum_{s_x^i\in M_x^\uparrow} \exp\{-2\xi(d_x(s_x^i)-a)\} +\hspace{-1.2em} \sum_{\tiny \begin{array}{cc}s_x^i\in M_x^\downarrow \\ s_x^i\ne s_x^0 \end{array}} \!\! \exp\{-2\xi(d_x(s_x^i)-a)\} 
$$ 
Using again Proposition~\ref{p:geometry}, we get for any $x\in\Omega^a_{j_0}$ the inequality
\begin{align}
0 \le & - \sum_{j\ne j_0} \exp\{-2\xi(|\kappa_j|^{-1}+\mathrm{dist}(x,O_j)-a)\}\iota_j^3(x) \nonumber \\
+ & \sum_{j\ne j_0} \exp\{-2\xi(\mathrm{dist}(x,\Gamma_j)-a)\}) \iota_j^{1,2}(x)\,; \label{est-int2}
\end{align}
since $x\in\Omega^a_{j_0}$, we are able to replace the indicator functions in this expression by their restriction $\iota_j^3 \upharpoonright \Omega^a_{j_0}$ and $\iota_j^{1,2} \upharpoonright \Omega^a_{j_0}$, respectively. Noting further that $\omega_{kj_0} \,\cap\, \Omega^a_{j_0} =\emptyset$ holds for $k=1,2,3$, we see that \eqref{est-int2} remains valid if the summation is taken over all the $j$'s. Integrating then the right-hand side over $x\in\Omega^a$ we arrive at the inequality
\begin{align*}
0 \le & - \sum_j \int_{\omega_{3j}\cap \Omega^a_{j_0}} \exp\{-2\xi(|\kappa_j|^{-1}+\mathrm{dist}(x,O_j)-a)\} \,\D x \\
& + \sum_j \int_{(\omega_{1j}\cup\omega_{2j})\cap \Omega^a_{j_0}} \exp\{-2\xi(\mathrm{dist}(x,\Gamma_j)-a)\} \,\D x,
\end{align*}
and summing this result over $j_0$ we get
\begin{align}
0 \le & - \sum_j \int_{\omega_{3j}\cap \Omega^a} \exp\{-2\xi(|\kappa_j|^{-1}+\mathrm{dist}(x,O_j)-a)\} \,\D x \nonumber \\
& + \sum_j \int_{(\omega_{1j}\cup\omega_{2j})\cap \Omega^a} \exp\{-2\xi(\mathrm{dist}(x,\Gamma_j)-a)\} \,\D x. \label{est-int3}
\end{align}
Combining now \eqref{est12} and \eqref{est-int3}, we obtain
\begin{align}
& \int_{\Omega_\mathrm{out} \setminus\{K_+\cup K_-\}} |\phi(x)|^2\, \D x \label{est12a} \\ & \le \sum_j \int_{(\omega_{1j}\cup\omega_{2j})\setminus\{K_+\cup K_-\}} \exp\{-2\xi(\mathrm{dist}(x,\Gamma_j)-a)\} \,\D x \nonumber \\
& \; - \sum_j \int_{\omega_{3j}\setminus\{K_+\cup K_-\}} \exp\{-2\xi(|\kappa_j|^{-1}+\mathrm{dist}(x,O_j)-a)\} \,\D x. \nonumber
\end{align}
The summation in \eqref{est12a} runs over all the curve segments including the straight ones. Let us first estimate the contribution of these infinite `arcs' to the positive part of \eqref{est12a} having in mind that in accordance with Remark~\ref{rem:segment} the segments with $\kappa=0$ do not contribute to the negative one. We denote by $\Gamma_+$ the segment with $s>\hat{s}$ and by $\omega_{1+}, \omega_{2+}$ the corresponding semi-infinite strips, then we have
\begin{align}
& \int_{(\omega_{1+}\cup\omega_{2+}) \setminus\{K_+\cup K_-\}} \exp\{-2\xi(\mathrm{dist}(x,\Gamma_+)-a)\}\, \D x \label{est12b}  \\[.3em] & \qquad \le 2\int_{\omega_{1+} \setminus K_+} \exp\{-2\xi(\mathrm{dist}(x,\Gamma_+)-a)\}\, \D x \nonumber \\[.3em] &  \qquad = 2\int_{\rho(\hat{s})\cos\Delta\theta}^\infty \int_{s\sin\Delta\theta_0}^\infty \exp\{-2\xi(t-a)\}\, \D t \D s \nonumber \\[.3em] & \qquad = \frac{\ee^{2\xi a}}{4\xi^2 \sin\Delta\theta_0}\, \ee^{-\xi\sin2\Delta_0\theta \cdot \rho(\hat{s})},\nonumber
\end{align}
where $\rho(\hat{s})$ is the distance of the point $\Gamma(\hat{s})$ to the origin. In view of our choice of $\hat{s}$ we have $\rho(\hat{s})\ge r_0$ and the integral at the right-hand side of \eqref{est12b} can be made arbitrarily small by choosing $r_0$ large enough. An analogous argument applies to the segment of $\Gamma$ with $s<-\hat{s}$.

Denote now by $\sum_j^*$ the sum over all the $\Gamma_j$ except of $\Gamma_\pm$. The conclusion just made allows us to replace the sum $\sum_j$ in \eqref{est12b} by $\sum_j^*$ with an error which can be made arbitrarily small by choosing an appropriately large $r_0$. Furthermore, we note that the positive part of \eqref{est12} cannot decrease if we enlarge the integration domain in all the integrals there replacing $(\omega_{1j}\cup\omega_{2j})\setminus\{K_+\cup K_-\}$ by $\omega_{1j}\cup\omega_{2j}$.

Our next goal is to argue that we can do the same in the negative part of \eqref{est12} replacing $\omega_{3j}\setminus\{K_+\cup K_-\}$ by $\omega_{3j}$. In such a case, of course, the corresponding change of the integrals goes in the wrong way; our aim is to show that it again produces an error which can be made small if $r_0$ is large. Indeed, regions $\omega_{3j}$ exist only for the curved segments of $\Gamma$ and those are by assumption inside $B_{\frac12 r_0}(O)$, while the regions $K_\pm$ are outside $B_{r_0}(O)$. Consequently, the contributions from the extended integration domains are
\begin{align}
& \int_{\omega_{3j}\cap\{K_+\cup K_-\}} \exp\{-2\xi(|\kappa_j|^{-1}+\mathrm{dist}(x,O_j)-a)\} \,\D x \label{estK3} \\
& \quad \le \ee^{2\xi a}\, |\Gamma_j| \int_{\rho\ge\sqrt{3}r_0/2}^\infty \ee^{-\sqrt{3}\,\xi\rho}\,\rho\,\D\rho = |\Gamma_j|\, \mathcal{O}(\ee^{-3\xi r_0/2}) \nonumber
\end{align}
uniformly in $j$, and since the length of the curved part is finite, the error coming from the extension of the integration domain is $\mathcal{O}(\ee^{-3\xi r_0/2})$. Combining \eqref{estK3} with \eqref{est12a} we get
\begin{align}
& \int_{\Omega_\mathrm{out} \setminus\{K_+\cup K_-\}} |\phi(x)|^2\, \D x \label{est12c} \\ & \le \sum_j\sumstar \int_{\omega_{1j}\cup\omega_{2j}} \exp\{-2\xi(\mathrm{dist}(x,\Gamma_j)-a)\} \,\D x \nonumber \\ & \; - \sum_j\sumstar \int_{\omega_{3j}} \exp\{-2\xi(|\kappa_j|^{-1}+\mathrm{dist}(x,O_j)-a)\} \,\D x + \mathcal{O}(\ee^{-3\xi r_0/2}). \nonumber
\end{align}
It is not difficult to evaluate the integrals appearing at the right-hand side of \eqref{est12c}: we have

\begin{align}
\int_{\omega_{2j}} & \exp\{-2\xi(\mathrm{dist}(x,O_j)-a)\} \,\D x = \Big(\frac{1}{2\xi} + \frac{a|\kappa_j|}{2\xi} + \frac{|\kappa_j|}{4\xi^2}\Big)|\Gamma_j| \nonumber\\
& = \frac{|\Gamma_j|}{2\xi} + \frac{a}{2\xi} \int_{\Gamma_j} |\kappa(s)|\,\D s + \frac{1}{4\xi^2} \int_{\Gamma_j} |\kappa(s)|\,\D s \label{int2}
\end{align}
and
\begin{align*}
\int_{\omega_{1j}} & \exp\{-2\xi(\mathrm{dist}(x,O_j)-a)\} \,\D x = \frac{|\Gamma_j|}{2\xi} - \frac{a}{2\xi} \int_{\Gamma_j} |\kappa(s)|\,\D s \nonumber\\
& - \frac{1}{4\xi^2} \int_{\Gamma_j} |\kappa(s)|\,\D s + \frac{1}{4\xi^2} \int_{\Gamma_j} \ee^{-2\xi(|\kappa(s)|^{-1}-a)}|\kappa(s)|\,\D s.  
\end{align*}
for the positive part of the estimate, while in the negative one we use
$$ 
\int_{\omega_{3j}} \exp\{-2\xi(\kappa_j^{-1}+\mathrm{dist}(x,O_j))\} \,\D x = \frac{1}{4\xi^2} \int_{\Gamma_j} \ee^{-2\xi(|\kappa(s)|^{-1}-a)}|\kappa(s)|\,\D s.
$$ 
Summing finally the contributions from given $\Gamma_j$ we get $|\Gamma_j|\xi^{-1}$, hence the expression \eqref{est12} is smaller that $2|\mu|^{-1/2}\hat{s} + o(r_0)$ which according to inequality \eqref{est9} proves part (a) of Theorem~\ref{th:main2} under the additional assumption~\ref{s4}.

\col{\subsection{Extension to the case when condition \ref{s4} does not hold}}

\col{Before turning to this task, let us recall how the parameters of the trial function constructed above depend on each other. Assume that the potential $v$, and thus $\mu$ and $\phi_0$, and also $g(t,s)$ are fixed, then the parameters $\delta$ and $\nu_0$ introduced in Sec.~\ref{ss:trial inside} depend on the curvature only. Note that this fact justifies \emph{a posteriori} the possibility to use a fixed function $g(s,t)$, since having a family of curves the curvatures of which do not differ much, one can certainly find a common interval of the variable $s$ on which these curvatures do not change sign. As for $\nu$, it can be chosen as any number in $(0,\nu_0)$ noting, however, that if $\nu\to 0$, the parameters $r_0,\,s_0,\,s^*$ and $\hat{s}$ which we pick after fixing $\nu$ must tend to infinity. The number $\Delta\theta_0$ in Sec.~\ref{ss:main1outside} depends on the geometry of $\Gamma$ only, more specifically on the angle between the asymptotes of $\Gamma$.

Proceeding with the choice of the parameters, we pick $r_0>0$ depending on $\nu$ and $\Delta\theta_0$; it must be sufficiently large so that the curved part of $\Gamma$ is contained in $B_{\frac12r_0}(O)$ and, at the same time, the error in \eqref{est9} coming from Lemmata~\ref{l:smallcross} and \ref{l:smallsecond} does not exceed $\frac{1}{16}\delta\nu$. Next, to the chosen $r_0$ we pick $\hat{s}$ and $s_0>\hat{s}$ so that the parts of $\Gamma$ with $|s|>\hat{s}$ are outside $B_{r_0}(O)$, and finally, we pick an $s^*\gg s_0$ depending on $s_0$ and $\nu$.}

\col{After this preliminary, let us turn to the proof} that \eqref{est9} remains valid without the assumption \ref{s4}. We will use the same trial function as before, in particular, its outer part will be again of the form $\psi_\mathrm{out}=\phi\chi_\mathrm{out}$ with $\phi$ given \eqref{outer}; the idea is to approximate the curve $\Gamma$ satisfying \ref{s1} by curves with a piecewise constant curvature, \col{of} the same length and \col{with} the same halfline asymptotes. Specifically, we are going to employ the following \col{approximation} result:
\begin{theorem}[Sabitov-Slovesnov \cite{SS10}] \label{th:ss10}
Let $\Gamma$ be a $C^3$-smooth curve \col{of a finite length} consisting of a finite number of segments such that on each of them the monotonicity character of the signed curvature $\kappa(\cdot)$ of $\Gamma$ and its sign are preserved. Then $\Gamma$ can be approximated by a $C^1$-smooth function $\hat\Gamma$ of the same length, the curvature of which is piecewise constant having jumps at the points $s_1<s_2< \cdots < s_N$, in the sense that the estimates
\begin{equation} \label{ss_approx}
\|\Gamma^{(m)} - \hat\Gamma^{(m)}\|_\infty \le C\,\max_{1\le k\le N-1}(s_{k+1}-s_k)^{3-m}, \quad m=0,1,2,
\end{equation}
hold with some $C>0$ for the function $\Gamma$ and its two first derivatives. \col{The endpoints of $\Gamma$ and $\hat\Gamma$ coincide, and the same is true for the tangent vectors at these points, and moreover, on each subinterval $(s_k,s_{k+1})$ the curvature $\hat\kappa$ of $\hat\Gamma$ satisfies the inequality $\min_{s\in(s_k,s_{k+1})} \kappa(s) \le \hat\kappa(s) \le \max_{s\in(s_k,s_{k+1})} \kappa(s)$.}
\end{theorem}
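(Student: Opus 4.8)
The plan is to pass to the \emph{natural equations} of a plane curve: an arc-length parametrised curve is determined, up to a rigid motion, by its signed curvature, and the reconstruction is explicit. Parametrising $\Gamma$ on $[0,L]$ and writing $\Gamma'(s)=(\cos\vartheta(s),\sin\vartheta(s))$ for the tangent angle $\vartheta$, one has $\vartheta'=\kappa$ (with the orientation convention of \ref{s1}; the sign plays no role below) and
\[
 \vartheta(s)=\vartheta(0)+\int_0^s\kappa(u)\,\D u,\qquad \Gamma(s)=\Gamma(0)+\int_0^s\big(\cos\vartheta(u),\sin\vartheta(u)\big)\,\D u .
\]
Hence it suffices to replace $\kappa$ by a piecewise constant function $\hat\kappa$, to define $\hat\vartheta$ and $\hat\Gamma$ by the same two formulae with $\hat\Gamma(0)=\Gamma(0)$ and $\hat\vartheta(0)=\vartheta(0)$, and to estimate the resulting deviations. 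Since $|\hat\Gamma'|\equiv1$, the curve $\hat\Gamma$ is automatically arc-length parametrised of the same length $L$, which disposes of the length constraint for free, and its tangent vector at the initial endpoint coincides with that of $\Gamma$ by construction.

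Next I would fix the partition and the values of $\hat\kappa$. Using \ref{s1}, cut $[0,L]$ into the finitely many maximal segments on which $\kappa$ is monotone of a fixed sign, and refine each of them by (quasi-uniform) nodes $s_1<\dots<s_N$ with $h:=\max_k(s_{k+1}-s_k)$. On $(s_k,s_{k+1})$ let $\hat\kappa$ be the mean value $(s_{k+1}-s_k)^{-1}\int_{s_k}^{s_{k+1}}\kappa$. By the mean value theorem it equals $\kappa(\xi_k)$ for some $\xi_k\in(s_k,s_{k+1})$, so it automatically satisfies the two-sided bound $\min_{(s_k,s_{k+1})}\kappa\le\hat\kappa\le\max_{(s_k,s_{k+1})}\kappa$ claimed in the statement, and in particular respects the sign of $\kappa$. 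The same choice gives $\int_{s_k}^{s_{k+1}}(\kappa-\hat\kappa)=0$, whence $\hat\vartheta=\vartheta$ at every node; in particular $\hat\vartheta(L)=\vartheta(L)$, so the tangent vectors of $\Gamma$ and $\hat\Gamma$ coincide at \emph{both} endpoints. Finally, since $\Gamma\in C^3$ forces $\kappa\in C^1$, for $s\in(s_k,s_{k+1})$,
\[
 |\vartheta(s)-\hat\vartheta(s)|=\Big|\int_{s_k}^s(\kappa-\hat\kappa)\Big|\le \|\kappa'\|_\infty\,(s_{k+1}-s_k)^2 ,
\]
so $\|\vartheta-\hat\vartheta\|_\infty=\OO(h^2)$.

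The two top-order estimates then follow at once. Writing $\Gamma''=\kappa\,(-\sin\vartheta,\cos\vartheta)$ and likewise for $\hat\Gamma''$, subtracting, and combining $|\kappa-\hat\kappa|\le\|\kappa'\|_\infty(s_{k+1}-s_k)$ with $|(-\sin\vartheta,\cos\vartheta)-(-\sin\hat\vartheta,\cos\hat\vartheta)|\le|\vartheta-\hat\vartheta|=\OO(h^2)$ and $|\hat\kappa|\le\|\kappa\|_\infty$ gives $\|\Gamma''-\hat\Gamma''\|_\infty=\OO(h)$, which is the case $m=2$; and $|\Gamma'-\hat\Gamma'|\le|\vartheta-\hat\vartheta|=\OO(h^2)$ settles $m=1$. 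The genuinely delicate point is the case $m=0$ together with the exact coincidence of the second endpoint. Writing $\Gamma-\hat\Gamma=\int_0^s\!\big(\beta\, n_{\hat\Gamma}+\OO(\beta^2)\big)$ with $\beta:=\vartheta-\hat\vartheta$ and $n_{\hat\Gamma}=(-\sin\hat\vartheta,\cos\hat\vartheta)$, one must show that $\sum_k\int_{s_k}^{s_{k+1}}\beta\, n_{\hat\Gamma}=\OO(h^3)$ uniformly in $s$; here the plain mean-value rule is not enough, since the leading term of $\int_{s_k}^{s_{k+1}}\beta$ is of order $(s_{k+1}-s_k)^3$ and summing over $\OO(h^{-1})$ subintervals only recovers $\OO(h^2)$. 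To gain the missing power I would refine the choice of $\hat\kappa$ so that it also annihilates the \emph{first} moment of $\kappa-\hat\kappa$ on consecutive blocks of two subintervals --- which, on a quasi-uniform mesh, can be arranged while keeping $\hat\kappa$ within $\OO(h)$ of $\kappa$ on each subinterval (so the stated two-sided bound holds up to a term absorbed into $C$, and exactly after one further subdivision) --- so that $\beta$ both vanishes at the block nodes and has vanishing mean over each block; then $\int_{\text{block}}\beta\, n_{\hat\Gamma}=\int_{\text{block}}\beta\,(n_{\hat\Gamma}-n_{\hat\Gamma}(\text{left node}))=\OO(h^4)$, which sums to $\OO(h^3)$, and the quadratic remainder contributes only $\OO(h^4)$. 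A concluding finite-dimensional correction --- perturbing a bounded number of the constants by $\OO(h^2)$ subject to preserving the total turning --- then removes the residual $\OO(h^3)$ discrepancy $\hat\Gamma(L)-\Gamma(L)$ exactly, without affecting the three estimates or the tangent matching. This last step --- extracting the extra power of $h$ in the position error while simultaneously pinning down the far endpoint and its tangent --- is the main obstacle; the $m=1$ and $m=2$ bounds, as well as the two-sided curvature bound, are immediate once the intrinsic reformulation is in place.
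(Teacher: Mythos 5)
First, a point of comparison: the paper does not prove this statement at all --- Theorem~\ref{th:ss10} is imported from Sabitov--Slovesnov \cite{SS10} and used as a black box --- so there is no internal argument to measure you against; your attempt has to stand on its own as a proof of the quoted approximation result. The intrinsic (natural-equation) setup you choose is sensible, and the routine parts are correct: reconstructing $\hat\Gamma$ from a piecewise constant $\hat\kappa$ with $\hat\Gamma(0)=\Gamma(0)$, $\hat\vartheta(0)=\vartheta(0)$ automatically gives an arc-length parametrised $C^1$ curve of the same length; taking $\hat\kappa$ to be the subinterval mean yields the exact two-sided curvature bound, matching of $\hat\vartheta$ at all nodes (hence of the tangents at both ends), $\|\vartheta-\hat\vartheta\|_\infty=\mathcal{O}(h^2)$, and the cases $m=1,2$ of \eqref{ss_approx} follow as you say.

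The gap is that everything which makes the theorem nontrivial is left as a plan. As you yourself compute, the mean-value choice only gives $\|\Gamma-\hat\Gamma\|_\infty=\mathcal{O}(h^2)$ (the block sums $\sum_k\int_{s_k}^{s_{k+1}}\beta$ are genuinely of order $h^2$, e.g.\ for $\kappa$ linear), so the $m=0$ case of \eqref{ss_approx} and the exact coincidence $\hat\Gamma(L)=\Gamma(L)$ are precisely what remains to be proved, and your two repairs are asserted rather than established. (a) The first-moment correction on two-interval blocks conflicts with the \emph{exact} inequality $\min_{(s_k,s_{k+1})}\kappa\le\hat\kappa\le\max_{(s_k,s_{k+1})}\kappa$: saying it "holds up to a term absorbed into $C$" is not meaningful for an inequality that carries no constant, and the claim that one further subdivision restores it is unsupported --- near points where $\kappa'$ vanishes (interior curvature extrema are allowed by the hypotheses) the oscillation of $\kappa$ on a subinterval can be of smaller order than the adjustment the moment condition forces, and clamping $\hat\kappa$ back into the admissible range reintroduces moment defects that threaten exactly the extra power of $h$ you need. (b) The concluding "finite-dimensional correction" that is to pin down $\hat\Gamma(L)$ exactly while preserving the total turning and the tangent matching needs a non-degeneracy argument (the endpoint responses to the perturbed constants must span $\R^2$, which fails locally where the tangent direction is nearly constant), and it must again be shown compatible with the exact curvature bounds. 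Until (a) and (b) are carried out, the proof establishes a weaker statement ($\mathcal{O}(h^2)$ in $C^0$, endpoints matching only up to $\mathcal{O}(h^2)$), which would not suffice verbatim for the theorem as quoted; note that the exact endpoint and tangent matching is what allows the paper to glue $\hat\Gamma$ $C^1$-smoothly to the straight parts of the waveguide, so it cannot be waived.
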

It is obvious that the hypotheses of Theorem~\ref{th:ss10} are satisfied under our assumption~\ref{s1} \col{on the $C^3$ smooth parts of $\Gamma$.}

\col{Let $\{\Gamma_n\}$ by a sequence of curves with the following properties:
\begin{enumerate}[(i)]
\item $\Gamma_n$ coincides with $\Gamma$ at all parts of the curve where $\kappa(s)=0$, in particular, all the $\Gamma_n$'s have the same asymptotes as $\Gamma$,
\item on the curved parts of $\Gamma$ the $\Gamma_n$'s approximate $\Gamma$ in the sense of Theorem~\ref{th:ss10} with $\max_k|s_{k+1}-s_k|<\frac{1}{n}$.
\end{enumerate}
Recall that the curved part of $\Gamma$ has finite length which allows us to use Theorem~\ref{th:ss10}, and note also that we are abusing notation here using for the approximating curve the symbol which coincides with the one which in the previous section meant the arcs of a curve satisfying assumption \ref{s4}.

Let $\hat\delta_n$ and $\hat\nu_{0,n}$ be the quantities corresponding to $\hat\Gamma_n$ in the same way as $\delta$ and $\nu_0$, respectively, correspond to $\Gamma$. It is easy to see from \eqref{ss_approx} and the point (ii) above $|\hat\kappa_n(s)-\kappa(s)|<Cn^{-1}$ holds for some $C>0$ independent of $s$, which implies  $\hat\delta_n\to\delta$ and $\hat\nu_{0,n}\to\nu_0$ as $n\to\infty$. By $\Omega_\mathrm{out}^{(n)}$ we denote the region exterior to the strip of halfwidth $a$ built around $\hat\Gamma_n$. The expression on the left-hand side of \eqref{est9} can be in this case rewritten as
\begin{equation} \label{approx1}
\int_{\Omega_\mathrm{out}\setminus\{K_+\cup K_-\}} |\phi(x)|^2\, \D x = \int_{\Omega_\mathrm{out}^{(n)}\setminus\{K_+\cup K_-\}} |\phi(x)|^2\, \D x + \int_{\Omega_\mathrm{out}\setminus\Omega_\mathrm{out}^{(n)}} |\phi(x)|^2\, \D x
\end{equation}
Since $\Gamma$ and $\hat\Gamma_n$ differ on a bounded interval of $s$ only, and since $|\Gamma(s)-\hat\Gamma_n(s)|_\infty\to 0$ holds as $n\to\infty$ by Theorem~\ref{th:ss10}, uniformly in $s$, in the second term on the right-hand side we integrate the function $|\phi(x)|^2\le 1$ over a region the measure of which tends to zero as $n\to\infty$. For the first integral on the right-hand side of \eqref{approx1} we have
\begin{align}
\int_{\Omega_\mathrm{out}^{(n)}\setminus\{K_+\cup K_-\}} |\phi(x)|^2\, \D x \le &\: \ee^{2\xi\|\Gamma-\hat\Gamma_n\|_\infty}
\int_{\Omega_\mathrm{out}^{(n)}\setminus\{K_+\cup K_-\}} \ee^{-2\xi\{\mathrm{dist}(x,\hat\Gamma_n)-a\}}\, \D x \nonumber \\
\le & \big[1+\varepsilon_n^{(1)}\big]\Big[ 2\hat{s}\,|\mu|^{-1/2} + \frac1{16} \hat\delta_n\hat\nu_n \Big], \label{approx2}
\end{align}
where $\nu_n\in(0,\nu_{0,n})$ is fixed and $\varepsilon_n^{(1)}\to 0$ as $n\to\infty$; we have used the fact that \eqref{est9} is valid for $\hat\Gamma_n$ as the latter satisfies condition \ref{s4} and the approximation preserves by Theorem~\ref{th:ss10} the length of the curve. Since the points $s=\pm \hat{s}$ are on the straight parts of $\Gamma$ (coinciding with those of $\hat\Gamma_n$), the distance between them along both $\Gamma$ and $\hat\Gamma_n$ is the same being equal to $2\hat{s}$.

Moreover, to derive \eqref{est9} for curves satisfying condition \ref{s4} we required $r_0$ and $\hat{s}$ which depend on $\nu$ to be large. For the approximating curves $r_0$ depends on $\hat\nu_n$ but we avoid the situation when $\hat\nu_n\to 0$ which would imply $r_0\to\infty$ as mentioned in the opening of this section. For the sake of definiteness we fix $\hat\nu_n = \frac12\hat\nu_{0,n}$ and consider $n$ sufficiently large to have $\hat\nu_{0,n} \ge \frac12\nu_0$. Then we can choose $\hat{s}$ in \eqref{approx2} independent of $n$, which yields
\begin{equation} \label{approx3}
\int_{\Omega_\mathrm{out}^{(n)}\setminus\{K_+\cup K_-\}} |\phi(x)|^2\, \D x \le 2\hat{s}\,|\mu|^{-1/2} + \frac1{16} \hat\delta_n\,\frac14\nu_0 + \varepsilon_n^{(2)},
\end{equation}
where $\varepsilon_n^{(2)}\to 0$ as $n\to\infty$. Combining this with the fact that $\hat\delta_n\to \delta$ for $n\to\infty$ as mentioned above, we infer from \eqref{approx3} that relation \eqref{est9} holds for $\nu=\frac18\nu_0$ and all $n$ large enough; this concludes the proof.}


\section{Concluding proofs of Theorems~\ref{th:main1} and Theorem~\ref{th:main2}}
\label{s: proof3}

It remains to establish parts (b) of both the main results. Let us begin with Theorem~\ref{th:main2} and prove it in the situation when $\Omega_+$ is \emph{convex}; by assumption \ref{p3} we have $\mu<0$. Inside $\Omega^a$ we choose the trial function as in the previous proofs so that inequality \eqref{inner} is valid for any $V_0\ge 0$; recall that we derived it assuming the presence of a bias. Moreover, picking a suitable coordinate $s^*\gg s_0$ at the right-hand side of \eqref{mollif}, the last term in \eqref{inner} can be made as before smaller than $\frac14\delta\nu$. Outside the strip $\Omega^a$ we set
\begin{equation} \label{outer2}
\phi(x) := \phi_\pm \exp\{ -|\xi_\pm|(\mathrm{dist}(x,\Gamma)-a)\} \quad \text{if}\;\; x\in\Omega_\pm^\mathrm{out},
\end{equation}
recalling that $\phi_-=1$, which is a natural generalization of \eqref{outer}, and we employ the same mollifier $\chi_\mathrm{out}$ as before, cf.~Sec.~\ref{ss:piecewise}. Repeating the argument of this section, we arrive at the inequality \eqref{est5}, however, now with the function $\phi$ given by \eqref{outer2}. Let us split the outer contribution to the quadratic form into two parts referring, respectively, to $\Omega_\pm^\mathrm{out}$, for which we have
\begin{subequations}
\label{outform}
\begin{align}
 & Q_\mathrm{out}^{(+)}[\psi_\mathrm{out}] = \int_{\Omega_+^\mathrm{out}} |\nabla\psi_\mathrm{out}(x)|^2\,\D x + \int_{\Omega_+^\mathrm{out}} (V_0-\mu) |\psi_\mathrm{out}(x)|^2\,\D x \nonumber \\
 & \qquad \le \int_{\Omega_+^\mathrm{out}} \big\{ |\nabla\phi(x)|^2 + (V_0-\mu)|\phi(x)|^2\big\} \chi_\mathrm{out}(x)^2 \,\D x + \frac{1}{16}\delta\nu, \label{outform+} \\
 & Q_\mathrm{out}^{(-)}[\psi_\mathrm{out}] \le \int_{\Omega_-^\mathrm{out}} \big\{ |\nabla\phi(x)|^2 -\mu|\phi(x)|^2\big\} \chi_\mathrm{out}(x)^2 \,\D x + \frac{1}{16}\delta\nu \label{outform-}
\end{align}
\end{subequations}
in view Lemmata~\ref{l:smallcross} and \ref{l:smallsecond} provided that $r_0$ is chosen large enough. As in the proof of the first part of Theorem~\ref{th:main1} we choose an $\hat{s}\in[r_0,s_0)$ for which the parts of $\Gamma$ with $|s|\ge\hat{s}$ are outside $B_{r_0}(O)$ and use the regions $K_\pm$ defined by \eqref{Ksector}. By the definition \eqref{outer2} we have
$$
|\nabla\phi|^2 = \xi_\pm^2 |\phi|^2 \quad\text{for}\;\; x\in\Omega_\pm^\mathrm{out}.
$$
Within $\Omega_\mathrm{out} \cap\{K_+\cup K_-\}$ we may use the $(s,t)$ coordinates, and noting the $\phi$ is independent of $s$ there, and as a function of $t$ it coincides with the eigenfunction $\phi_0$ of $h$, cf. \eqref{profileSO}, we get
\begin{subequations}
\label{Kest}
\begin{align}
\int_{\Omega_+^\mathrm{out} \cap\{K_+\cup K_-\}} & \big\{ |\nabla\phi(x)|^2 + (V_0-\mu) |\phi(x)|^2\big\} \chi_\mathrm{out}(x)^2 \,\D x \nonumber \\
& \le |\xi_+| \,\phi_+^2\, \|\chi_\mathrm{in}\|^2_{L^2((-\infty,-\hat{s}]\cup [\hat{s},\infty))} \label{Kestplus}
\end{align}
and
\begin{align}
\int_{\Omega_-^\mathrm{out} \cap\{K_+\cup K_-\}} & \big\{ |\nabla\phi(x)|^2 -\mu|\phi(x)|^2\big\} \chi_\mathrm{out}(x)^2 \,\D x \nonumber \\
& \le \xi_- \|\chi_\mathrm{in}\|^2_{L^2((-\infty,-\hat{s}]\cup [\hat{s},\infty))}.  \label{Kestminus}
\end{align}
\end{subequations}
So far we have not employed the convexity of $\Omega_+$; we will need it from now on to estimate the integrals \eqref{Kest}. As before we will first prove the second claim of Theorem~\ref{th:main1} under the additional assumption \ref{s4} using again the notation introduced in Fig.~4.

The part $\Omega_\mathrm{out}^{(-)}$ consists then of a finite number of sectors $\omega_{2j}$ which in view of the convexity assumption do not overlap mutually.
\col{Moreover, since $\Gamma$ is a $C^1$ curve, the neighbouring sectors have common boundaries which are the halflines normal to $\Gamma$ at the points where the curvature changes value, and as a result, the closures of sectors $\omega_{2j}$ cover the region $\Omega_\mathrm{out}^{(-)}$}. Let $\Gamma_\pm$ and $\omega_{k\pm},\,k=1,2$, be the same as in part (a) of Theorem~\ref{th:main2}. By the same reasoning as in the proof of the latter, cf.~\eqref{est12b}, one can check that the contribution of the regions $\omega_{k\pm} \setminus\{K_+\cup K_-\}$ to the integrals \eqref{Kest} can be made arbitrarily small by choosing $r_0$ sufficiently large. Using further the fact that $|\chi_\mathrm{out}|\le 1$ in combination with \eqref{int2}, we get
\begin{align}
 & \int_{\Omega_\mathrm{out}^{(-)}\setminus\{K_+\cup K_-\}} \big\{ |\nabla\phi(x)|^2 -\mu|\phi(x)|^2\big\} \chi_\mathrm{out}(x)^2 \,\D x \label{tildeminus} \\
 & \qquad \le 2|\mu| \Big[ \frac{2\hat{s}}{\:2\xi_-} + \frac{a}{2\xi_-} \int_{-\hat{s}}^{\hat{s}} \kappa(s)\,\D s + \frac{1}{4\xi_-^2} \int_{-\hat{s}}^{\hat{s}} \kappa(s)\,\D s \Big] + \mathcal{O}(\ee^{-cr_0}) \nonumber \\
 & \qquad = 2\xi_- \hat{s} + a\xi_- \int_{-\hat{s}}^{\hat{s}} \kappa(s)\,\D s + \frac{1}{2} \int_{-\hat{s}}^{\hat{s}} \kappa(s)\,\D s + \mathcal{O}(\ee^{-cr_0}) \nonumber
\end{align}
for some $c>0$, and since $\kappa(s)=0$ for $|s|>\hat{s}$ we can let the variable $s$ in the above integrals run over the whole $\R$. Comparing now the right-hand side of \eqref{tildeminus} with that of \eqref{inner}, we see that the terms containing $\xi_-$ in the latter have their counterparts here with the opposite sign, hence they cancel mutually.

Next we estimate the contribution to \eqref{Kestplus} coming from $\Omega_\mathrm{out}^{(+)}$. We note that $|\nabla\phi|^2=(-\mu+V_0)|\phi|^2=|\xi_+|^2|\phi(x)|^2$ holds almost everywhere in $\Omega_+^\mathrm{out}$ which means that the integral at the right-hand side of \eqref{outform+} can be rewritten as $2|\xi_+|^2 \int_{\Omega_+^\mathrm{out}} |\phi(x)|^2 \chi_\mathrm{out}(x)^2 \,\D x$. In analogy with \eqref{est10} we can estimate the function $\phi$ using local extrema of the distance function, namely
\begin{align}
& |\phi(x)|^2 = \phi_+^2\,\exp\{-2|\xi_+|(d_x(s_x^0)-a)\} \label{est15} \\ & \le \phi_+^2 \Big[-\!\!\sum_{s_x^i\in M_x^\uparrow}\! \exp\{-2|\xi_+|(d_x(s_x^i)\!-\!a)\} +\!\! \sum_{s_x^i\in M_x^\downarrow}\! \exp\{-2|\xi_+|(d_x(s_x^i)\!-\!a)\} \Big]. \nonumber
\end{align}
As in part (a) of Theorem~\ref{th:main2}, we want to replace the integral of the expression at the right-hand side of \eqref{est15} over $\Omega_\mathrm{out}^{(+)}\setminus\{K_+\cup K_-\}$ by the sum of the integrals over the regions $\omega_{1j}$ and $\omega_{3j}$ corresponding to the partition of the curve segment with $s\in[-\hat{s},\hat{s}]$ into circular arcs. In analogy with relation \eqref{est12} we get
\begin{align}
& \int_{\Omega_\mathrm{out}^{(+)}\setminus\{K_+\cup K_-\}} |\phi(x)|^2\, \D x \label{est13}\\
& \le \phi_+^2 \sum_j \Big\{ \int_{\omega_{1j}\cap\{\Omega_\mathrm{out}^{(+)}\setminus\{K_+\cup K_-\}\}} \exp\{-2|\xi_+|(\mathrm{dist}(x,\Gamma_j)-a)\} \,\D x \nonumber \\
& \quad - \int_{\omega_{3j}\cap\{\Omega_\mathrm{out}^{(+)}\setminus\{K_+\cup K_-\}\}} \exp\{-2|\xi_+|(|\kappa_j|^{-1}+\mathrm{dist}(x,O_j)-a)\} \,\D x \Big\}, \nonumber
\end{align}
where in contrast to \eqref{est12} the right-hand side \eqref{est13} does not involve integrals over $\omega_{2j}$ because in view of the convexity assumption we have $\Omega_\mathrm{out}^{(+)}\cap \omega_{2j} =\emptyset$ holds for any $j$.

Following the strategy used in the proof of part (a) of Theorem~\ref{th:main2}, we want to replace integrals over $\omega_{kj}\cap\{\Omega_\mathrm{out}^{(+)}\setminus\{K_+\cup K_-\}\},\, k=1,3$, with those over the extended regions $\omega_{kj}\setminus\{K_+\cup K_-\}$, respectively. To this aim, we employ the following simple geometric result:
\begin{lemma} \label{l:justomega1}
Suppose that $x\in\Omega_-$ does not belong to the boundaries of $\omega_{kj},\,k=1,2,3$, for any $j$. Let further the distance function $d_x(s)$ reach a minimum which is not global at a point of the curve belonging to an arc $\Gamma_{j^*}$, then we have $x\in\omega_{1j^*}$.
\end{lemma}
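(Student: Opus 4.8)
The plan is to argue by the geometry of the distance function $d_x(\cdot)$ under the convexity hypothesis, reducing the statement to the elementary dichotomy between the wedge-shaped regions $\omega_{1j}$ and $\omega_{2j}$ attached to each circular arc. First I would fix the point $x\in\Omega_-$, assumed to lie off all the region boundaries, and let $s^*$ be the coordinate where $d_x(\cdot)$ attains the non-global minimum in the interior of the arc $\Gamma_{j^*}$. Since $s^*$ is a critical point of $d_x(\cdot)$ lying in the interior of $\Gamma_{j^*}$, by Proposition~\ref{p:geometry}(iii) the point $x$ must belong to one of the closures $\bar\omega_{1j^*}$, $\bar\omega_{2j^*}$, $\bar\omega_{3j^*}$, $\bar\Omega^a_{j^*}$; the hypothesis that $x$ lies on none of the region boundaries rules out the closures in favour of the open regions, and $x\in\Omega_-$ (together with $\Omega^a_{j^*}\subset\Omega^a$) excludes $\Omega^a_{j^*}$. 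By Proposition~\ref{p:geometry}(ii), $x\in\omega_{3j^*}$ would force the extremum at $s^*$ to be a \emph{maximum}, contrary to our assumption that it is a minimum; hence $x\in\omega_{1j^*}\cup\omega_{2j^*}$.

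Next I would separate the two remaining cases using the orientation convention and the geometry of a single circular arc. Recall that $\omega_{2j^*}$ is the wedge on the concave side of $\Gamma_{j^*}$ (containing the arc's centre $O_{j^*}$ and meeting $\Omega_-$ only when the arc bends \emph{towards} $\Omega_-$), while $\omega_{1j^*}$ is the wedge on the convex side. The key observation is that for $x$ in the concave wedge $\omega_{2j^*}$, the interior critical point of $d_x(\cdot)$ on $\Gamma_{j^*}$ is the \emph{global} minimum of $d_x(\cdot)$ over the whole arc (the distance from $x$ to a point on the arc, as a function of arclength, is strictly convex there and attains its least value at the foot of the perpendicular), and one has to check it is in fact global on all of $\Gamma$, or at least that the hypothesis "$s^*$ is a minimum that is \emph{not} global'' is incompatible with $x\in\omega_{2j^*}$. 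Here is where convexity of $\Omega_+$ enters: since $\Omega_+$ is convex and $\Gamma=\partial\Omega_+$, the region $\Omega_-$ lies entirely on one side, and the concave wedges $\omega_{2j}$ point into $\Omega_+$, so in fact $\omega_{2j}\cap\Omega_-=\emptyset$ for every $j$ — this is exactly the fact already used in deriving \eqref{est13}. Therefore $x\in\Omega_-$ forces $x\notin\omega_{2j^*}$, leaving only $x\in\omega_{1j^*}$, which is the claim.

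I would organize the write-up as: (1) invoke Proposition~\ref{p:geometry}(iii)–(iv) to pin $x$ into $\omega_{1j^*}\cup\omega_{2j^*}\cup\omega_{3j^*}$ and then into $\omega_{1j^*}\cup\omega_{2j^*}$ via part (ii) and the "minimum, not maximum'' information; (2) invoke the convexity of $\Omega_+$ to conclude $\omega_{2j}\cap\Omega_-=\emptyset$ for all $j$, hence in particular $x\notin\omega_{2j^*}$; (3) conclude $x\in\omega_{1j^*}$. The step I expect to be the only genuinely substantive point is (2): one must verify carefully that under the $C^1$ convexity assumption the inward-pointing normal wedges $\omega_{2j}$ (those whose apex angle opens towards the centre $O_j$ of the arc) are contained in $\bar\Omega_+$, so that they cannot meet the open region $\Omega_-$ — but this is precisely the geometric fact that was already asserted and used in the paragraph preceding \eqref{est13}, so it can be cited rather than re-proved. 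Everything else is a direct bookkeeping application of Proposition~\ref{p:geometry}, so the lemma should follow in a few lines.
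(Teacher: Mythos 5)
Your step (1) -- using Proposition~\ref{p:geometry} to place $x$ in $\omega_{1j^*}\cup\omega_{2j^*}$ once a minimum sits in the interior of $\Gamma_{j^*}$ -- is fine, but step (2), which carries all the weight, is false. The fact invoked before \eqref{est13} is $\Omega_\mathrm{out}^{(+)}\cap\omega_{2j}=\emptyset$, i.e.\ the wedges $\omega_{2j}$ avoid the \emph{convex} region $\Omega_+$ outside the strip; it is not the statement $\omega_{2j}\cap\Omega_-=\emptyset$ that you need. In fact the opposite is true: $\omega_{2j}$ is the outer wedge on the side \emph{away} from the centre $O_j$ (this is visible from the Jacobian factor $(1+|\kappa_j|t)$ producing the value \eqref{int2}, and from the paper's remark that $\Omega_\mathrm{out}^{(-)}$ is covered by the sectors $\omega_{2j}$ when $\Omega_+$ is convex), while $\omega_{1j}$ is the bounded wedge between the strip and $O_j$. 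So for a generic $x\in\Omega_-^{\mathrm{out}}$ the arc carrying the \emph{global} minimum does have $x\in\omega_{2j}$; no wedge $\omega_{kj}$ is disjoint from $\Omega_-$ (the wedge $\omega_{1j}$ can also reach into $\Omega_-$ through the far side of $\Omega_+$, which is exactly how the lemma's conclusion is realized). Consequently a proof by ``one of the two wedges never meets $\Omega_-$'' cannot work, and indeed your argument never uses the hypothesis that the minimum is \emph{not global}, which is essential -- without it the statement is simply wrong.

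The missing idea is precisely the one you flagged and then abandoned: one must show that $x\in\omega_{2j^*}$ is incompatible with the minimum on $\Gamma_{j^*}$ being non-global. A point of $\omega_{2j^*}$ lies on the outward normal ray through an interior point of $\Gamma_{j^*}$; since $\overline{\Omega}_+$ is convex, the tangent line there is a supporting line, so that arc point is the (unique) metric projection of $x$ onto $\overline{\Omega}_+$ and hence the \emph{global} minimizer of $d_x$, a contradiction. The paper argues the equivalent reformulation (Lemma~5.1$'$) differently, by tracking the side from which the segment $L(s)$ meets $\Gamma$ and using that a convex region admits exactly two tangents through an exterior point, so every extremum other than the global minimum is approached from the $\Omega_+$ side; either route is fine, but some use of convexity \emph{together with} non-globality is indispensable, and your write-up as it stands has neither.
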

\noindent The lemma in fact says that if $\Omega_+$ is convex, it cannot happen that $x\in\omega_{2j^*}$, which is obviously equivalent to the following claim: \\[.5em]
\noindent \textbf{Lemma 5.1\hspace{-1pt}'.}
\emph{Let $x\in\Omega_-$. For any distance function extremum, except the global minimum, the segment $L_x^i$ connecting the points $x$ and $\Gamma(s_x^i)$ approaches the curve from the side of $\Omega_+$.}
\begin{proof}
The point of global minimum is obviously approached for the region where $x$ lies, that is, from $\Omega_-$. The next two extrema on both sides of $s_x^0$, provided they exist, are necessarily maxima, and in view of the assumed convexity of $\Omega_+$ the segments $L_x^i$ cannot approach $\Gamma(s_x^i)$ from the side of $\Omega_-$. We denote by $L(s)$ the segment connecting the point $x$ with $\Gamma(s)$. The side from which $L(s)$ approaches the curve can change only at the points where the angle $\beta(s)$ between the segment $L(s)$ and $L_x^0$ corresponding to the global minimum of $d_x(\cdot)$ has, as a function of~$s$, a local maximum or minimum. Since the curve $\Gamma$ is by assumption \mbox{$C^1$-smooth}, and so is $\beta(\cdot)$, the lines connecting such points with $x$ are tangent to $\Omega$, however, a convex region cannot cross its own tangent, hence the extrema of the function $\beta(\cdot)$ are global, one maximum and one minimum. The corresponding points $s_x^i$, provided both of them exist, lie on both sides of $s_x^0$ because a convex region can have only two tangents passing through an exterior point $x$ and the point $\Gamma(s_x^i)$ lies between the two tangent points on the boundary of $\Omega_+$. The same tangent argument shows that once the $L(s)$ switches the side from which it approached $\Gamma$ it can never come back.
\end{proof}

As before all the local extrema of $d_x(\cdot)$ for $x\in\Omega_-$ except the global minimum come in pairs, so in analogy with \eqref{est12} we are able to estimate the expression \\ $\phi_+^2 \int_{\Omega_-\setminus\{K_+\cup K_-\}} \exp\{-2|\xi_+|(d_x(s_x^0)-a)\}\,\D x$ from above by
\begin{align}
& \phi_+^2 \sum_j \Big\{ \int_{\omega_{1j}\cap \{\Omega_- \setminus\{K_+\cup K_-\}\}} \exp\{-2|\xi_+|(\mathrm{dist}(x,\Gamma_j)-a)\} \,\D x \label{est14} \\
& - \int_{\omega_{3j}\cap \{\Omega_- \setminus\{K_+\cup K_-\}\}} \exp\{-2|\xi_+|(|\kappa_j|^{-1}\!+\mathrm{dist}(x,O_j)-a)\} \,\D x \Big\}, \nonumber
\end{align}
where in view of Lemma~\ref{l:justomega1} the first part does not include integration over $\omega_{2j}\cap \{\Omega_- \setminus\{K_+\cup K_-\}\}$. Adding \eqref{est14} to \eqref{est13}, we get
\begin{align}
& \int_{\Omega_\mathrm{out}^{(+)}\setminus\{K_+\cup K_-\}} |\phi(x)|^2\, \D x \label{est16}\\
& \le \phi_+^2 \sum_j \Big\{ \int_{\omega_{1j}\cap\tilde\Omega} \exp\{-2|\xi_+|(\mathrm{dist}(x,\Gamma_j)-a)\} \,\D x \nonumber \\
& \quad - \int_{\omega_{3j}\cap\tilde\Omega} \exp\{-2|\xi_+|(|\kappa_j|^{-1}+\mathrm{dist}(x,O_j)-a)\} \,\D x \Big\}, \nonumber
\end{align}
where $\tilde\Omega:= \Omega_- \cup \{\Omega_\mathrm{out}^{(+)}\setminus\{K_+\cup K_-\}\}$. Moreover, applying again the argument that lead to \eqref{estK3} we infer that one can replace $\omega_{1j}\cap\tilde\Omega$ and $\omega_{3j}\cap\tilde\Omega$ in \eqref{est16} by $\omega_{1j}\setminus\{K_+\cup K_-\}$ and $\omega_{3j}\setminus\{K_+\cup K_-\}$, respectively, with an error which can be made arbitrarily small by choosing $r_0$ large enough. The rest of the proof of part (b) of Theorem~\ref{th:main2} for a convex $\Omega_+$ repeats the corresponding part of the proof of the part (a); in the final step we take into account that a convex $\Gamma$ can be approximated by convex curves of piecewise constant curvature \col{so we van proceed as in Sec.~4.3; note that by \eqref{ss_approx} the curvature of $\Gamma$ is approximated pointwise by the those of curves $\hat\Gamma_n$}.

To complete the proof of part (b) of Theorem~\ref{th:main2}, assume next that $\Omega_+$ is \emph{concave}. This case is already easy given the fact that in the first part of the proof we have not used the difference between $|\xi_+|$ and $|\xi_-|$, or between $\phi_+$ and $\phi_-$; the latter was set to one for convenience only. The role of the convexity was just to help us to distinguish the extrema of the distance function referring to the two outer parts of the trial function; if $\Omega_-$ is convex, we can repeat the argument step by step interchanging the roles of $\Omega_-$ and $\Omega_+$ arriving thus at the sought claim.

It remains to prove part (b) of Theorem~\ref{th:main1} where we have $\mu=0$ by assumption and $\Omega_+$ is again convex. Since $V_0>0$, the equation $h\phi=0$ has a resonance solution $\phi_0$ which is constant for $t\le-a$ and decays exponentially for $t>a$; as before we normalize it putting $\phi_-=1$. We have to construct a trial function $\psi\in H^2(\R^2)$ which makes the quadratic form \eqref{form}, now containing the potential bias, negative. We use elements of the previous proofs. In particular, inside $\Omega^a$ the function will be given by \eqref{trial_in} and \eqref{mollif}. Outside $\Omega^a$ the trial function in $\Omega_-$ will be the same as in the proof of part (a) of Theorem~\ref{th:main1}, \col{cf.~Sec.3.2}, while in $\Omega_+$ we choose it as in the of part (b) of Theorem~\ref{th:main2} discussed above, putting there $\mu=0$, in other words, as \eqref{outer2} in which in view of \eqref{shorthands} we set $\xi_+=-\sqrt{V_0}$. Repeating then the estimates used to prove part (a) Theorem~\ref{th:main1} in $\Omega_-$ and part (a) of Theorem~\ref{th:main2} in $\Omega_+$, we obtain
\begin{equation}\label{biasedform2}
Q[\psi] = -\frac18\delta\nu - \int_\R \kappa(s)\,\D s + o(\psi),
\end{equation}
where the error term can be made arbitrarily small by choosing large $r_0$ and $s^*$ in \eqref{mollif}. In view of the assumed convexity of $\Omega_+$ we have $\int_\R \kappa(s)\,\D s>0$, hence choosing the parameters properly we can make the form negative; this concludes the proof of part (b) of Theorem~\ref{th:main1}.

\begin{remark}
As we have noted in the introduction, the `two-sided' validity of part (b) Theorem~\ref{th:main2} does not extend to the zero-energy resonance case. The above proof indicates the source of this difference. While for $\mu<0$ we can use the trial function from the proof of part (b) of Theorem~\ref{th:main1} and simply switch the roles of $\Omega_+$ and $\Omega_-$, a similar interchange does not work if $\mu=0$ because it leads to the sign change of the second term on the right-hand side of \eqref{biasedform2} and we are obviously not free to choose $\delta\nu$ to compensate this positive number.
\end{remark}


\begin{ack}
We thank the referee for useful comments.
\end{ack}

\begin{funding}
The work of P.E. was supported by the Czech Science Foundation within the project 21-07129S. S.V. was funded by Deutsche Forschungsgemeinschaft-Project-ID 258734477-SFB-1173.
\end{funding}



\end{document}